\journalname{}
\definecolor{darkblue}{rgb}{0,0,1}
\newtheorem{theorem}{Theorem}
\newtheorem{proposition}{Proposition}
\newtheorem{lemma}{Lemma}
\newtheorem{assumption}{Assumption}
\newtheorem{example}{Example}
\def \qed {\hfill \vrule height6pt width 6pt depth 0pt}
\begin{document}

\begin{frontmatter}



\dochead{}

\title{Stochastic Alternating Direction Method of Multipliers \\ for Byzantine-Robust Distributed Learning}


\author{Feng Lin$^1$ \quad Weiyu Li$^2$ \quad  Qing Ling$^{1, 3, 4}$ }

\address{
$1$. Sun Yat-Sen University \\
$2$. University of Science and Technology of China \\
$3$. Guangdong Province Key Laboratory of Computational Science \\
$4$. Pazhou Lab \\
\vspace{0.5em} Corresponding Author: Qing Ling \\
E-mail address: lingqing556@mail.sysu.edu.cn \\
}


\begin{abstract}
This paper aims to solve a distributed learning problem under Byzantine attacks. In the underlying distributed system, a number of unknown but malicious workers (termed as Byzantine workers) can send arbitrary messages to the master and bias the learning process, due to data corruptions, computation errors or malicious attacks. Prior work has considered a total variation (TV) norm-penalized approximation formulation to handle the Byzantine attacks, where the TV norm penalty forces the regular workers' local variables to be close, and meanwhile, tolerates the outliers sent by the Byzantine workers. To solve the TV norm-penalized approximation formulation, we propose a Byzantine-robust stochastic alternating direction method of multipliers (ADMM) that fully utilizes the separable problem structure. Theoretically, we prove that the proposed method converges to a bounded neighborhood of the optimal solution at a rate of $O(1/k)$ under mild assumptions, where $k$ is the number of iterations and the size of neighborhood is determined by the number of Byzantine workers. Numerical experiments on the MNIST and COVERTYPE datasets demonstrate the effectiveness of the proposed method to various Byzantine attacks.
\end{abstract}

\begin{keyword}
Distributed machine learning, alternating direction method of multipliers (ADMM), Byzantine attacks
\end{keyword}

\end{frontmatter}


\section{Introduction}
\label{sec:intro}

Most of the traditional machine learning algorithms require to collect training data from their owners to a single computer or data center, which is not only communication-inefficient but also vulnerable to privacy leakage \cite{Agrawal2000, Sicari2015, Lu2018}. With the explosive growth of the big data, federated learning has been proposed as a novel privacy-preserving distributed machine learning scheme, and received extensive research interest recently \cite{Konecny2015, Konecny2016, Kairouz2021}. In federated learning, the training data are stored at distributed workers, and the workers compute their local variables using local training data, under the coordination of a master. This scheme effectively reduces the risk of data leakage and protects privacy.


However, federated learning still faces significant security challenges. Some of the distributed workers, whose identities are unknown, could be unreliable and send wrong or even malicious messages to the master, due to data corruptions, computation errors or malicious attacks. To characterize the worse-case scenario, we adopt the Byzantine failure model, in which the Byzantine workers are aware of all information of the other workers, and able to send arbitrary messages to the master \cite{Lamport1982, Lynch996,Venpaty2013,Chen2018a}. In this paper, we aim at solving the distributed learning problem under Byzantine attacks that potentially threat federated learning applications.

\textbf{Related works}. With the rapid popularization of federated learning, Byzantine-robust distributed learning has become an attractive research topic in recent years. Most of the existing algorithms modify distributed stochastic gradient descent (SGD) to the Byzantine-robust variants. In the standard distributed SGD, at every iteration, all the workers send their local stochastic gradients to the master, while the master averages all the received stochastic gradients and updates the optimization variable. When Byzantine workers are present, they can send faulty values other than true stochastic gradients to the master so as to bias the learning process. 
It is shown that the standard distributed SGD with mean aggregation is vulnerable to Byzantine attacks \cite{Blanchard2017}.

When the training data are independently and identically distributed (i.i.d.) at the workers, the stochastic gradients of the regular workers are i.i.d. too. This fact motivates two mainstream methods to deal with Byzantine attacks: \textit{attack detection} and \textit{robust aggregation}. For \textit{attack detection}, \cite{Li2019} and \cite{Li2020} propose to offline train an autoencoder, which is used to online calculate credit scores of the workers. The messages sent by the workers with lower credit scores will be discarded in the mean aggregation. The robust subgradient push algorithm in \cite{Ravi2019} operates over a decentralized network. Each worker calculates a score for each of its neighbors, and isolates those with lower scores. The works of \cite{Alistarh2018,Zhu2021} detect the Byzantine workers with historic gradients so as to ensure robustness. The work of \cite{Chen2018} uses redundant gradients for attack detection. However, it requires overlapped data samples on multiple workers, and does not fit for the federated learning setting. For \textit{robust aggregation}, 
the master can use geometric median, instead of mean, to aggregate the received messages \cite{Chen2018b, Xie2018a, Cao2020}.
When the number of Byzantine workers is less than the number of regular workers, geometric median provides a reliable approximation to the average of regular workers' stochastic gradients. Other similar robust aggregation rules include marginal trimmed mean and dimensional median \cite{Yin2018a, Xie2018b, Xie2018c}. Some aggregation rules select a representative stochastic gradient from all the received ones to update the global variable, e.g., Medoid \cite{Xie2018a} and Krum \cite{Blanchard2017}. Medoid selects the stochastic gradient with the smallest distance from all the others, while Krum selects the one with the smallest squared distance to a fixed number of nearest stochastic gradients. An extension of Krum, termed as $h$-Krum, selects $h$ stochastic gradients with Krum and uses their average. Bulyan \cite{Mhamdi2018} first selects a number of stochastic gradients with Krum or other robust selection/aggregation rules, and then uses their trimmed dimensional median.

When the training data and the stochastic gradients are non-i.i.d. at the workers, which is common in federated learning applications \cite{Kevin2019}, naive robust aggregation of stochastic gradients no longer works. The works of \cite{He2020-rs,Peng2020-rs} adopt a resampling strategy to alleviate the effect caused by non-i.i.d. training data. With a larger resampling parameter, the algorithms can handle higher data heterogeneity, at the cost of tolerating less Byzantine workers. Robust stochastic aggregation (RSA) aggregates local variables, instead of stochastic gradients \cite{Liping2018}. To be specific, it considers a total variation (TV) norm-penalized approximation formulation to handle Byzantine attacks, where the TV norm penalty forces the regular workers' local variables to be close, and meanwhile, tolerates the outliers sent by the Byzantine workers. Although the stochastic subgradient method proposed in \cite{Liping2018} is able to solve the TV norm-penalized approximation formulation, it ignores the separable problem structure.

Other related works include \cite{Wu2019,Mahdi2020,Khanduri2019, Karimireddy2021}, which shows that the stochastic gradient noise affects the effectiveness of robust aggregation rules. Thus, the robustness of the Byzantine-resilience methods can be improved by reducing the variance of stochastic gradients. The asynchronous Byzantine-robust SGD is considered in \cite{Damaskinos2018,Yang2020,Xie2019}. The work of \cite{Yin2018b} addresses the saddle-point attacks in the non-convex setting, and \cite{Yang2019a,Yang2019b,Guo2020,Peng2021-dec} consider Byzantine robustness in decentralized learning.

\textbf{Our contributions}. Our contributions are three-fold.

 (i) We propose a Byzantine-robust stochastic alternating direction method of multipliers (ADMM) that utilizes the separable problem structure of the TV norm-penalized approximation formulation. The stochastic ADMM updates are further simplified, such that the iteration-wise communication and computation costs are the same as those of the stochastic subgradient method.

 (ii) We theoretically prove that the proposed stochastic ADMM converges to a bounded neighborhood of the optimal solution at a rate of $O(1/k)$ under mild assumptions, where $k$ is the number of iterations and the size of neighborhood is determined by the number of Byzantine workers.


 (iii) We conduct numerical experiments on the MNIST and COVERTYPE datasets to demonstrate the effectiveness of the proposed stochastic ADMM to various Byzantine attacks.

\section{Problem Formulation}
\label{sec:2}

Let us consider a distributed network with a master and $m$ workers, among which $q$ workers are Byzantine and the other $r = m-q$ workers are regular. The exact value of $q$ and the identities of the Byzantine workers are all unknown. We are interested in solving a stochastic optimization problem in the form of
\vspace{-0.5em}
\begin{align}\label{eq1}
\min\limits_{\tilde{x} } ~ \sum^{m}_{i=1}\mathbb{E}[F(\tilde{x},\xi_i)]+f_0(\tilde{x}),
\end{align}
where $\tilde{x} \in \mathbb{R}^d$ is the optimization variable, $f_0(\tilde{x})$ is the regularization term known to the master, and $F(\tilde{x}, \xi_i)$ is the loss function of worker $i$ with respect to a random variable $\xi_i \sim \mathcal{D}_i$. Here we assume that the data distributions $\mathcal{D}_i$ on the workers can be different, which is common in federated learning applications.

Define $\mathcal{R}$ and $\mathcal{B}$ as the sets of regular workers and Byzantine workers, respectively. We have $|\mathcal{B}| = q$ and $|\mathcal{R}| = r$. Because of the existence of Byzantine workers, directly solving \eqref{eq1} without distinguishing between regular and Byzantine workers is meaningless. A less ambitious alternative is to minimize the summation of the regular workers' local expected cost functions plus the regularization term, in the form of
\begin{align}\label{eq2}
\min\limits_{\tilde{x} } ~ \sum_{i \in \mathcal{R}}\mathbb{E}[F(\tilde{x},\xi_i)]+f_0(\tilde{x}).
\end{align}

Our proposed algorithm and RSA \cite{Liping2018} both aggregate optimization variables, instead of stochastic gradients. To do so, denote $x_i$ as the local copy of $\tilde{x}$ at a regular worker $i \in \mathcal{R}$, and $x_0$ as the local copy at the master. Collecting the local copies in a vector $x = [x_0; \cdots; x_i; \cdots] \in \mathbb{R}^{(r+1)d}$, we know that \eqref{eq2} is equivalent to
\begin{align}\label{eq3}
\min\limits_{x} & ~ \sum_{i\in\mathcal{R}}\mathbb{E}[F(x_i,\xi_i)]+f_0(x_0), \\
s.t.            & ~ x_i - x_0 = 0, ~ \forall i \in \mathcal{R}, \nonumber
\end{align}
where $x_i - x_0 = 0$, $\forall i \in \mathcal{R}$ are the consensus constraints to force the local copies to be the same.

RSA \cite{Liping2018} considers a TV norm-penalized approximation formulation of \eqref{eq3}, in the form of
\begin{align}\label{eq4}
\min\limits_{x }\sum_{i\in\mathcal{R}}(\mathbb{E}[F(x_i,\xi_i)] + \lambda\Vert x_i-x_0\Vert_1)+f_0(x_0),
\end{align}
where $\lambda$ is a positive constant and $\sum_{i\in\mathcal{R}} \Vert x_i-x_0\Vert_1$ is the TV norm penalty for the constraints in \eqref{eq3}. The TV norm penalty forces the regular workers' local optimization variables to be close to the master's, and meanwhile, tolerates the outliers when the Byzantine attackers are present. Due to the existence of the nonsmooth TV norm term, RSA solves \eqref{eq4} with the stochastic subgradient method. The updates of RSA, at the existence of Byzantine workers, are as follows. At time $k$, the master sends $x_0^k$ to the workers, every regular worker $i \in \mathcal{R}$ sends $x_i^k$ to the master, while every Byzantine worker $j \in \mathcal{B}$ sends an arbitrary malicious vector $u_j^k \in \mathbb{R}^d$ to the master.
Then, the updates of $x^{k+1}_i$ for every regular worker $i$ and $x^{k+1}_0$ for the master are given by
\begin{align}
x^{k+1}_i  =&~ x_i^k -\alpha^k \left( F'(x_i^k,\xi^k_i) + \lambda sgn(x_i^k-x_0^k) \right), \nonumber \\
x^{k+1}_0  =&~ x_0^k -\alpha^k \Big( f_0'(x_0^k) - \lambda \sum_{i \in \mathcal{R}} sgn(x_i^k-x_0^k) - \lambda \sum_{j \in \mathcal{B}} sgn(u_j^k-x_0^k) \Big),\label{eq:rsa-update}
\end{align}
where $F'(x_i^k,\xi^k_i)$ is a stochastic gradient at $x_i^k$ respect to a random sample $\xi_i^k$ for regular worker $i$, $sgn(\cdot)$ is the element-wise sign function ($sgn(a)=1$ if $a>0$, $sgn(a)=-1$ if $a<0$, and $sgn(a)\in[-1,1]$ if $a=0$), and $\alpha^k$ is the diminishing learning rate at time $k$.

Although RSA has been proven as a robust algorithm under Byzantine attacks \cite{Liping2018}, the sign functions therein enable the Byzantine workers to send slightly modified messages that remarkably biases the learning process. In addition, RSA fully ignores the special separable structure of the TV norm penalty. In this paper, we also consider the TV norm-penalized approximation formulation \eqref{eq4}, propose a stochastic ADMM that utilizes the problem structure, and develop a novel Byzantine-robust algorithm.


\section{Algorithm Development}
\label{sec:3}
In this section, we utilize the separable problem structure of \eqref{eq4} and propose a robust stochastic ADMM to solve it. The challenge is that the unknown Byzantine workers can send faulty messages during the optimization process. At this stage, we simply ignore the existence of Byzantine workers and develop an algorithm to solve \eqref{eq4}. Then, we will consider the influence of Byzantine workers on the algorithm. We begin with applying the stochastic ADMM to solve \eqref{eq4}, and then simplify the updates such that the iteration-wise communication and computation costs are the same as those of the stochastic subgradient method in \cite{Liping2018}.

\textbf{Stochastic ADMM}. Suppose that all the workers are regular such that $m=r$. To apply the stochastic ADMM, for every worker $i$, introduce auxiliary variables $z(0,i), z(i,0) \in \mathbb{R}^d$ on the directed edges $(0,i), (i,0)$, respectively. By introducing consensus constraints $z(i,0)=x_0$ and $z(0,i)=x_i$, \eqref{eq4} is equivalent to
\begin{align}\label{eq5}
\hspace{-1em} \min\limits_{x, z} & ~ \sum_{i\in\mathcal{R}} (\mathbb{E}[F(x_i,\xi_i)]+ \lambda\Vert z(0,i)-z(i,0)\Vert_1)+f_0(x_0), \\
              s.t. & ~ z(i,0)-x_0=0, ~ z(0,i)-x_i=0, ~ \forall i \in \mathcal{R}. \nonumber
\end{align}
For the ease of presentation, we stack these auxiliary variables in a new variable $z \in \mathbb{R}^{2rd}$. As we will see below, the introduction of $z$ is to split the expectation term $\sum_{i\in\mathcal{R}} \mathbb{E}[F(x_i,\xi_i)]$ and the TV norm penalty term $\sum_{i\in\mathcal{R}} \Vert x_i-x_0\Vert_1$ so as to utilize the separable problem structure.

The augmented Lagrangian function of \eqref{eq5} is
\begin{align} \label{eq6}
\mathcal{L}_{\beta}(x,z,\eta) = &\sum_{i \in \mathcal{R}}(\mathbb{E}[F(x_i,\xi_i)]+ \lambda\Vert z(0,i)-z(i,0)\Vert_1)+f_0(x_0) \nonumber \\
+&\sum_{i \in \mathcal{R}} \left(\langle \eta(i,0),z(i,0)-x_0 \rangle +\frac{\beta}{2}\Vert z(i,0)-x_0\Vert^2 \right)
+\sum_{i \in \mathcal{R}} \left(\langle \eta(0,i),z(0,i)-x_i \rangle +\frac{\beta}{2}\Vert z(0,i)-x_i\Vert^2 \right),
\end{align}
where $\beta$ is a positive constant, while $\eta(i,0) \in \mathbb{R}^d$ and $\eta(0,i) \in \mathbb{R}^d$ are the Lagrange multipliers attached to the consensus constraints $z(i,0)-x_0 = 0$ and $z(0,i)-x_i = 0$, respectively. For convenience, we also collect all the Lagrange multipliers in a new variable $\eta \in \mathbb{R}^{2rd}$.

Given the augmented Lagrangian function \eqref{eq6}, the vanilla ADMM works as follows.
At time $k$, it first updates $x^{k+1}$ through minimizing the augmented Lagrangian function at $z=z^k$ and $\eta=\eta^k$, then updates $z^{k+1}$ through minimizing the Lagrangian function at $x=x^{k+1}$ and $\eta=\eta^k$, and finally updates $\eta^{k+1}$ through dual gradient ascent.
The updates are given by
\begin{subequations} \label{eq8}
\begin{align}
x^{k+1} & = \arg\min\limits_{x } \mathcal{L}_\beta (x,z^k,\eta^k), \label{eq8a}\\
z^{k+1} & = \arg\min\limits_{z } \mathcal{L}_\beta (x^{k+1},z,\eta^k), \label{eq8b}\\
\eta^{k+1}(i,0) & = \eta^{k}(i,0) + \beta(z^{k+1}(i,0)-x_0^{k+1}),\quad
\eta^{k+1}(0,i) = \eta^{k}(0,i) + \beta(z^{k+1}(0,i)-x_i^{k+1}). \label{eq8c}
\end{align}
\end{subequations}

However, the $x$-update in \eqref{eq8a} is an expectation minimization problem and hence nontrivial. To address this issue, \cite{Hua2013} proposes to replace the augmented Lagrangian function $\mathcal{L}_\beta (x,z^k,\eta^k)$ with its stochastic counterpart, given by
\begin{align} \label{eq7}
\mathcal{L}_{\beta}^k(x,z,\eta) = & \sum_{i \in \mathcal{R}} \lambda\Vert z(0,i)-z(i,0)\Vert_1
+f_0(x_0^k) + \langle f'_0(x_0^k), x_0 \rangle + \frac{\sigma^k \Vert x_0-x_0^k\Vert^2}{2} \nonumber \\
+& \sum_{i \in \mathcal{R}} \left (F(x_i^k,\xi_i^k) + \langle F'(x_i^k,\xi_i^k), x_i \rangle + \frac{\sigma^k\Vert x_i-x_i^k\Vert^2}{2} \right) \nonumber \\
+&\sum_{i \in \mathcal{R}} \left(\langle \eta(i,0),z(i,0)-x_0 \rangle +\frac{\beta}{2}\Vert z(i,0)-x_0\Vert^2 \right)
+\sum_{i \in \mathcal{R}} \left(\langle \eta(0,i),z(0,i)-x_i \rangle +\frac{\beta}{2}\Vert z(0,i)-x_i\Vert^2 \right),
\end{align}
where $\xi_i^k$ is the random variable of worker $i$ at time $k$ and $\sigma^k$ is the positive stepsize. Observe that \eqref{eq7} is a stochastic first-order approximation to \eqref{eq6}, in the sense that
\begin{align}
\mathbb{E}[F(x_i,\xi_i)] & \simeq F(x_i^k,\xi_i^k) + \langle F'(x_i^k,\xi_i^k), x_i \rangle + \frac{\sigma^k\Vert x_i-x_i^k\Vert^2}{2} \quad \text{and} \quad
f_0(x) \simeq f_0(x_0^k) + \langle f'_0(x_0^k), x_0 \rangle + \frac{\sigma^k\Vert x_0-x_0^k\Vert^2}{2}, \nonumber
\end{align}
at the points $x_i = x_i^k$ and $x_0 = x_0^k$, respectively.

With the stochastic approximation, the explicit solutions of $x_i^{k+1}$ and $x_0^{k+1}$ are
\begin{align}
x^{k+1}_i & =  \frac{1}{\sigma^k+\beta} \left( \sigma^k x_i^k + \beta z^k(0,i) + \eta^k(0,i) - F'(x_i^k,\xi^k_i)  \right), \nonumber \\
x^{k+1}_0 & =  \frac{1}{\sigma^k+m\beta} \Big( \sigma^k x_0^k  + \sum_{i \in \mathcal{R}}( \beta z^k(i,0)+\eta^k(i,0)) - f_0'(x_0^k)  \Big). \label{eq-explicit-xx}
\end{align}
For simplicity, we replace the parameter $\frac{1}{\sigma^k+\beta}$ by $\alpha_i^k$ and $\frac{1}{\sigma^k+\sum_{i \in \mathcal{R}}\beta}$ by $\alpha_0^k$. Thus, \eqref{eq-explicit-xx} is equivalent to
\begin{align}
&x^{k+1}_i  = x_i^k -\alpha_i^k \left( F'(x_i^k,\xi^k_i) + \beta x_i^k - \beta z^k(0,i) - \eta^k(0,i) \right), \nonumber \\
&x^{k+1}_0  = x_0^k -\alpha_0^k \Big( f_0'(x_0^k) +  \sum_{i \in \mathcal{R}} (\beta x_0^k - \beta z^k(i,0)-\eta^k(i,0)) \Big). \label{eqxx}
\end{align}

\textbf{Simplification}. Observe that the $z$-update in \eqref{eq8b} is also challenging as the variables $z(0,i)$ and $z(i,0)$ are coupled by the TV norm penalty term. Next, we will simplify the three-variable updates in \eqref{eqxx}, \eqref{eq8b} and \eqref{eq8c} to eliminate the $z$-update and obtain a more compact algorithm. Note that the decentralized deterministic ADMM can also be simplified to eliminate auxiliary variables \cite{sta-opt}. However, we are considering the distributed stochastic ADMM, and the TV norm penalty term makes the simplification much more challenging.

\begin{proposition}[\textbf{Simplified stochastic ADMM}]\label{lem:simp}
Suppose $m=r$. The updates \eqref{eqxx}, \eqref{eq8b} and \eqref{eq8c} can be simplified as
\begin{align}\label{eqxi-update}
x^{k+1}_i = &~x_i^k -\alpha_i^k \left( F'(x_i^k,\xi^k_i) + 2\eta^k_i - \eta^{k-1}_i  \right),\\
\label{eqx0-update} x^{k+1}_0 =& ~x_0^k -\alpha_0^k \bigg( f_0'(x_0^k) - \sum\limits_{i \in \mathcal{R}} (2\eta^k_i - \eta^{k-1}_i) \bigg),\\
\eta^{k+1}_i:=&~\eta^{k+1}(i,0)=-\eta^{k+1}(0,i)
= \mathrm{proj}_{\lambda}\left(\eta^k_i+\frac{\beta}{2}(x^{k+1}_i-x^{k+1}_0) \right), \label{eq11}
\end{align}
where $\mathrm{proj}_{\lambda}(\cdot)$ is the projection operator that for each dimenison maps any point in $\mathbb{R}$ onto $[-\lambda,\lambda]$.
\end{proposition}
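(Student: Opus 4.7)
The plan is to eliminate $z$ by exploiting the separability of the $z$-update across workers $i$ and the two-sided coupling through the $\ell_1$ term, then substitute back into \eqref{eqxx}.

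First, I would observe that the $z$-step in \eqref{eq8b} decouples across $i \in \mathcal{R}$, reducing for each $i$ to
\begin{equation*}
\min_{z(0,i),z(i,0)} \lambda\|z(0,i)-z(i,0)\|_1 + \langle\eta^k(i,0),z(i,0)-x_0^{k+1}\rangle + \tfrac{\beta}{2}\|z(i,0)-x_0^{k+1}\|^2 + \langle\eta^k(0,i),z(0,i)-x_i^{k+1}\rangle + \tfrac{\beta}{2}\|z(0,i)-x_i^{k+1}\|^2.
\end{equation*}
Writing first-order conditions with a common subgradient $s \in \partial\|z^{k+1}(0,i)-z^{k+1}(i,0)\|_1$, using that the subgradient with respect to $z(i,0)$ is $-\lambda s$ and with respect to $z(0,i)$ is $+\lambda s$, I get
\begin{align*}
\beta(z^{k+1}(i,0)-x_0^{k+1}) &= \lambda s - \eta^k(i,0),\\
\beta(z^{k+1}(0,i)-x_i^{k+1}) &= -\lambda s - \eta^k(0,i).
\end{align*}
Plugged into the dual update \eqref{eq8c}, these give $\eta^{k+1}(i,0)=\lambda s$ and $\eta^{k+1}(0,i)=-\lambda s$, proving by induction the skew-symmetry $\eta^{k+1}(0,i)=-\eta^{k+1}(i,0)$, and thereby justifying the shorthand $\eta^{k+1}_i$ in \eqref{eq11}.

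Next, to obtain the projection form in \eqref{eq11}, I would add/subtract the two first-order equations. Using the induction hypothesis $\eta^k(0,i)=-\eta^k_i$, adding them yields $z^{k+1}(i,0)+z^{k+1}(0,i)=x_0^{k+1}+x_i^{k+1}$, and subtracting, together with $\eta^{k+1}_i=\lambda s$, leads to the coordinate-wise identity
\begin{equation*}
\eta^{k+1}_i + \tfrac{\beta}{2}\bigl(z^{k+1}(0,i)-z^{k+1}(i,0)\bigr) = \eta^k_i + \tfrac{\beta}{2}(x_i^{k+1}-x_0^{k+1}),
\end{equation*}
with $\eta^{k+1}_i = \lambda s$ and $s \in \partial\|z^{k+1}(0,i)-z^{k+1}(i,0)\|_1$. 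This is exactly a Moreau-type decomposition of the right-hand side into the proximal part of $\lambda\|\cdot\|_1$ and its conjugate $\iota_{[-\lambda,\lambda]}^*$; a short case split on the sign of each coordinate (the usual soft-threshold/box-projection argument) then identifies $\eta^{k+1}_i$ as the coordinate-wise projection of $\eta^k_i+\frac{\beta}{2}(x_i^{k+1}-x_0^{k+1})$ onto $[-\lambda,\lambda]$, giving \eqref{eq11}.

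Finally, to simplify the $x$-updates in \eqref{eqxx}, I would express $\beta z^k$ in terms of $\eta^k$ and $\eta^{k-1}$ by applying the first-order equations one step earlier: $\beta(z^k(0,i)-x_i^k)=-(\eta^k_i-\eta^{k-1}_i)$ and $\beta(z^k(i,0)-x_0^k)=\eta^k_i-\eta^{k-1}_i$. Combined with $\eta^k(0,i)=-\eta^k_i$ and $\eta^k(i,0)=\eta^k_i$, the brackets in \eqref{eqxx} collapse:
\begin{equation*}
\beta x_i^k - \beta z^k(0,i) - \eta^k(0,i) = 2\eta^k_i - \eta^{k-1}_i, \qquad \sum_{i}(\beta x_0^k-\beta z^k(i,0)-\eta^k(i,0)) = -\sum_{i}(2\eta^k_i-\eta^{k-1}_i),
\end{equation*}
yielding \eqref{eqxi-update} and \eqref{eqx0-update}. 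The main obstacle is the Moreau/projection identification in the middle step—handling the nondifferentiability of $\|\cdot\|_1$ carefully enough to rigorously equate the subgradient value $\lambda s$ with $\mathrm{proj}_{\lambda}(\cdot)$ coordinate-wise—together with being careful about the inductive base case (choosing, e.g., $\eta^{-1}_i=\eta^0_i=0$ so that the skew-symmetry invariant holds from the outset).
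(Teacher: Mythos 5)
Your proposal is correct and follows essentially the same route as the paper: decouple the $z$-step across workers, use its optimality conditions together with the dual update \eqref{eq8c} to show $\eta^{k+1}(i,0)=-\eta^{k+1}(0,i)$ and to identify $\eta^{k+1}_i$ as the box projection, then substitute back into \eqref{eqxx}. The only difference is cosmetic: the paper packages the two-variable $\ell_1$-coupled quadratic subproblem into Lemma~\ref{lem:cal} and solves it via a bi-level reduction over $\Delta=z_1-z_2$, whereas you work directly with the joint subgradient conditions and a coordinate-wise sign case split — both yield the same identities \eqref{eq10a}--\eqref{eq10b}.
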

\begin{proof} See \ref{app:lem1}.\end{proof}

\textbf{Presence of Byzantine workers}. Now we start to consider how the stochastic ADMM updates \eqref{eqxi-update}, \eqref{eqx0-update} and \eqref{eq11} are implemented when the Byzantine workers are present. At time $k$, every regular worker $i \in \mathcal{R}$ updates $x_i^{k+1}$ with \eqref{eqxi-update} and $\eta^{k+1}_i$ with \eqref{eq11}, and then sends $\eta_i^{k+1}$ to the master. Meanwhile, every Byzantine worker $j \in \mathcal{B}$ can cheat the master by sending $\eta_j^{k+1} \mathbb{R}^d$ where the elements are arbitrary within $[-\lambda, \lambda]^d$. Otherwise, the Byzantine worker $j$ can be directly detected and eliminated by the master. This amounts to that every Byzantine worker $j \in \mathcal{B}$ follows an update rule similar to \eqref{eq11}, as
\begin{align}
\eta^{k+1}_j=\mathrm{proj}_{\lambda}\left(\eta^k_j+\frac{\beta}{2}(u^{k+1}_j-x^{k+1}_0) \right), \label{eq11c}
\end{align}
where $u^{k+1}_j \in \mathbb{R}^d$ is an arbitrary vector. After receiving all messages $\eta_i^{k+1}$ from the regular workers $i \in \mathcal{R}$ and $\eta_j^{k+1}$ from the Byzantine workers $j \in \mathcal{B}$, the master updates $x_0^{k+1}$ via
\begin{align} \label{eqx0-update-1}
x^{k+1}_0 =& ~x_0^k -\alpha_0^k \Big( f_0'(x_0^k) - \sum\limits_{i \in \mathcal{R}} (2\eta^k_i - \eta^{k-1}_i)
- \sum\limits_{j \in \mathcal{B}}(2\eta^k_j - \eta^{k-1}_j) \Big).
\end{align}

The Byzantine-robust stochastic ADMM for distributed learning is outlined in Algorithm \ref{alg} and illustrated in Figure \ref{pic1}. Observe that the communication and computation costs are the same as those in the stochastic subgradient method in \cite{Liping2018}. The only extra cost is that every worker $i$ must store the dual variable $\eta^{k}_i$.

Comparing the stochastic subgradient updates \eqref{eq:rsa-update} with the stochastic ADMM updates \eqref{eqxi-update}, \eqref{eqx0-update} and \eqref{eq11}, we can observe a primal-dual connection. In the stochastic subgradient method, the workers upload primal variables $x_i^k$, while in the stochastic ADMM, the workers upload dual variables $\eta_i^{k+1}$. The stochastic subgradient method controls the influence of a malicious message by the sign function. No matter what the malicious message is, its modification on each dimension is among $-\lambda$, $\lambda$, and a  value within $[-\lambda, \lambda]$ if the values of the malicious worker and the master are identical. The stochastic ADMM controls the influence of a malicious message by the projection function. The modification of the malicious message on each dimension is within $[-\lambda, \lambda]$.

\begin{algorithm}[htb!]
	\caption{Byzantine-Robust Stochastic ADMM}\label{alg}
	\centerline{\textbf{Master}}
    Initialize $x_0^0$, $\eta^{-1}_i$, and $\eta^{0}_i$.
	\begin{algorithmic}[1]
		\While {not stopped}
        \State Update $x_0^{k+1}$ via \eqref{eqx0-update-1};
        \State Broadcast $x_0^{k+1}$ to all the workers;
        \State Receive $\eta_i^{k+1}$ from the regular workers $i \in \mathcal{R}$ and $\eta_j^{k+1}$ from the Byzantine workers $j \in \mathcal{B}$.
 		\EndWhile
	\end{algorithmic}
	\centerline{\textbf{Regular Worker $i$}}
    Initialize $x^0_i$, $\eta^{-1}_i$, and $\eta^{0}_i$.
	\begin{algorithmic}[1]
		\While {not stopped}
        \State Update $x_i^{k+1}$ via \eqref{eqxi-update};
        \State Update $\eta^{k+1}_i$  via \eqref{eq11};
        \State Send $\eta^{k+1}_i$ to the master;
        \State Receive $x_0^{k+1}$ from the master.
 		\EndWhile
	\end{algorithmic}
\end{algorithm}

\begin{figure}[htb!]
  \centering
    \includegraphics[width=0.5\textwidth]{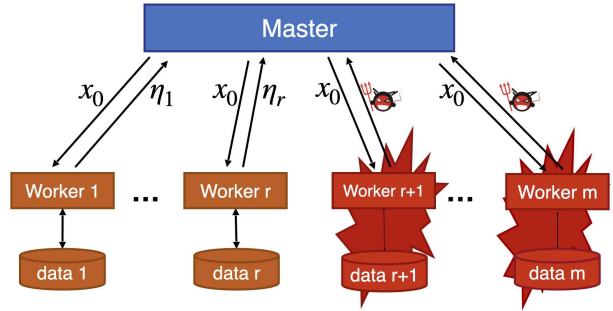}
    \caption{An illustration of the distributed stochastic ADMM for Byzantine-robust learning. In the master-worker architecture, there are $m$ workers in total, among which $r$ are regular workers and $q = m-r$ are Byzantine workers. }
  \label{pic1} 
\end{figure}

\section{Convergence Analysis}
\label{sec:4}

In this section, we analyze the convergence of the proposed Byzantine-robust stochastic ADMM. We make the following assumptions, which are common in analyzing distributed stochastic optimization algorithms.

\begin{assumption}[\textbf{Strong convexity}]\label{ass1}
The local cost functions $\mathbb{E}[F(x_i, \xi_i)]$ and the regularization term $f_0(x_0)$ are strongly convex with constants $\mu_i$ and $\mu_0$, respectively.
\end{assumption}

\begin{assumption} [\textbf{Lipschitz continuous gradients}] \label{ass2}
The local cost functions $\mathbb{E}[F(x_i, \xi_i)]$ and the regularization term $f_0(x_0)$ have Lipschitz continuous gradients with constants $L_i$ and $L_0$, respectively.
\end{assumption}

\begin{assumption}[\textbf{Bounded variance}]\label{ass:var}
Within every worker $i$, the data sampling is i.i.d. with $\xi^k_i \sim \mathcal{D}_i$.  The variance of stochastic gradients $F'({x}, \xi_i)$ is upper bounded by $\delta^2_i$, as
\begin{equation}\mathbb{E} \| F'({x}, \xi_i)- \mathbb{E}[F'({x}, \xi_i)] \|^2 \leq \delta^2_i.\label{eq:ass-var}\end{equation}
\end{assumption}

\subsection{Main Results}

First, we show the equivalence between \eqref{eq2} and \eqref{eq5}. When the penalty parameter $\lambda$ is sufficiently, it has been shown in Theorem 1 of \cite{Liping2018} that the optimal primal variables of \eqref{eq5} are consensual and identical to the minimizer of \eqref{eq2}. We repeat this conclusion in the following lemma.

\begin{lemma}[\textbf{Consensus and equivalence}]\label{Coro:1}
Suppose Assumptions \ref{ass1} and \ref{ass2} hold. If {$\lambda\ge\lambda_0:=\max_{i\in\mathcal{R}}\|\mathbb{E}[F'({\tilde{x}^*}, \xi_i)]\|_\infty$}, then for all $i \in \mathcal{R}$, we have
$$x_i^*=x_0^*=\tilde{x}^*, $$
where $x_i^*$ and $x_0^*$ are the optimal primal variables of \eqref{eq5}, and $\tilde{x}^*$ is the minimizer of \eqref{eq2}.
\end{lemma}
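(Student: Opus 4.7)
The plan is to reduce \eqref{eq5} to \eqref{eq4} by eliminating the auxiliary variables and then verify that the candidate $x_i = x_0 = \tilde{x}^*$ satisfies the KKT/optimality conditions of the resulting problem whenever $\lambda$ dominates the gradient bound $\lambda_0$. Concretely, the constraints $z(i,0)=x_0$ and $z(0,i)=x_i$ let me substitute into the TV term, so $\|z(0,i)-z(i,0)\|_1 = \|x_i-x_0\|_1$, which shows \eqref{eq5} is equivalent to the unconstrained penalized problem
\begin{align*}
\min_{x}\; \sum_{i\in\mathcal{R}}\bigl(\mathbb{E}[F(x_i,\xi_i)] + \lambda\|x_i-x_0\|_1\bigr) + f_0(x_0),
\end{align*}
i.e., \eqref{eq4}. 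Under Assumptions~\ref{ass1}--\ref{ass2} this objective is strongly convex in $(x_0,\{x_i\}_{i\in\mathcal{R}})$, so the minimizer is unique and it suffices to exhibit one point satisfying first-order optimality.

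First I would write down the subdifferential optimality conditions: there must exist $s_i \in \partial\|x_i^*-x_0^*\|_1 \subseteq [-1,1]^d$ with
\begin{align*}
\mathbb{E}[F'(x_i^*,\xi_i)] + \lambda s_i = 0, \quad i\in\mathcal{R}, \qquad f_0'(x_0^*) - \lambda\sum_{i\in\mathcal{R}} s_i = 0.
\end{align*}
Then I would try the candidate $x_i^* = x_0^* = \tilde{x}^*$ and define $s_i := -\mathbb{E}[F'(\tilde{x}^*,\xi_i)]/\lambda$. Because $\lambda \ge \lambda_0 = \max_{i\in\mathcal{R}}\|\mathbb{E}[F'(\tilde{x}^*,\xi_i)]\|_\infty$, each $s_i$ lies in $[-1,1]^d$, so it is an admissible element of $\partial\|0\|_1$. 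The per-worker equation is satisfied by construction, and summing over $i\in\mathcal{R}$ the master equation becomes $f_0'(\tilde{x}^*)+\sum_{i\in\mathcal{R}}\mathbb{E}[F'(\tilde{x}^*,\xi_i)] = 0$, which is exactly the first-order condition defining $\tilde{x}^*$ as the minimizer of \eqref{eq2}. Hence $(\tilde{x}^*,\ldots,\tilde{x}^*)$ is optimal for \eqref{eq4}, and by uniqueness it is the only optimum.

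The main obstacle, although minor, is making precise the relation between the $\ell_1$ subdifferential at zero (a set-valued box) and the freedom to pick $s_i$ matching a given vector of gradients. One has to verify that the coordinate-wise bound $\|\mathbb{E}[F'(\tilde{x}^*,\xi_i)]\|_\infty \le \lambda$ (the $\ell_\infty$ norm, dual to $\ell_1$) is exactly the right quantity to control, and that the independent choice of $s_i$ for each worker does not conflict with the master's aggregate condition; the computation above shows these two pieces dovetail cleanly. A brief remark that strong convexity gives uniqueness, together with the citation to Theorem~1 of~\cite{Liping2018}, then closes the argument.
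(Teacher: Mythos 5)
Your proof is correct and follows essentially the same route the paper relies on: it verifies the subdifferential optimality conditions of the penalized problem at the consensual point $x_i=x_0=\tilde{x}^*$, using $\lambda\ge\lambda_0$ to place $s_i=-\mathbb{E}[F'(\tilde{x}^*,\xi_i)]/\lambda$ in the $\ell_1$ subdifferential box and strong convexity for uniqueness. The paper itself does not reprove this lemma (it cites Theorem 1 of \cite{Liping2018} and records the same KKT conditions in its Lemma~\ref{thm:opt}), so your self-contained argument is a faithful reconstruction of that standard proof.
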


Intuitively, setting a sufficiently large penalty parameter $\lambda$ ensures the variables $x_i$ and $x_0$ to be consensual, since a larger $\lambda$ gives more weight on the consensus constraints. When the training data at the workers are non-i.i.d., the local expected gradients $\mathbb{E}[F'({\tilde{x}^*}, \xi_i)]$ deviate from 0, which leads to a large lower bound $\lambda_0$ to maintain consensus. Once the variables are consensual, \eqref{eq5} is equivalent to \eqref{eq2}.

Now, we present the main theorem on the convergence of the proposed Byzantine-robust stochastic ADMM.
\begin{theorem}[\textbf{$O(1/k)$-convergence}] \label{thm:conv}
Suppose Assumptions \ref{ass1}, \ref{ass2}, and \ref{ass:var} hold. Let $\lambda\ge\lambda_0$
and the stepsizes be
$$\alpha_0^k=\min\left\{\frac1{ck+m\beta}, \frac1{\mu_0+L_0}, \frac1{\mu_i+L_i}\right\}, \quad
\alpha_i^k=\min\left\{\frac1{ck+\beta}, \frac1{\mu_0+L_0}, \frac1{\mu_i+L_i}\right\},\quad \forall i\in\mathcal{R},$$ for some positive constants $c<\min\left\{\frac{\mu_0L_0}{\mu_0+L_0}, \frac{\mu_iL_i}{\mu_i+L_i}: i \in \mathcal{R}\right\}$ and $\beta>0$.
Then we have
\begin{equation}
\mathbb{E} \|x_0^k-x_0^*\|^2+\sum_{i \in \mathcal{R}}\mathbb{E} \|x_i^k-x_i^*\|^2=O(1/k)+O(\lambda^2q^2).
\label{eq:Vk-bound}
\end{equation}
\end{theorem}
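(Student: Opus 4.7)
The plan is to build a Lyapunov function that combines the squared primal errors $\|x_0^k-x_0^*\|^2+\sum_{i\in\mathcal{R}}\|x_i^k-x_i^*\|^2$ with a dual tracking term in $\eta_i^k-\eta_i^*$, and to show it obeys a recursion of the form $\mathbb{E}V^{k+1}\le(1-c_1\alpha^k)\mathbb{E}V^k+c_2(\alpha^k)^2+c_3\alpha^k\lambda^2 q^2$. The first step is to extract the KKT conditions of \eqref{eq5}: by Lemma \ref{Coro:1} the primal optimum satisfies $x_i^*=x_0^*=\tilde{x}^*$, and there exist dual variables $\eta_i^*\in[-\lambda,\lambda]^d$ with $\mathbb{E}[F'(x_i^*,\xi_i)]+\eta_i^*=0$ for all $i\in\mathcal{R}$ and $f_0'(x_0^*)-\sum_{i\in\mathcal{R}}\eta_i^*=0$. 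I would then write each update as a perturbation of the corresponding optimality identity.

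Next, I would expand one-step errors. For each regular worker, substituting \eqref{eqxi-update} yields
\begin{equation*}
x_i^{k+1}-x_i^*=x_i^k-x_i^*-\alpha_i^k\bigl((F'(x_i^k,\xi_i^k)-\mathbb{E}[F'(x_i^*,\xi_i)])+2(\eta_i^k-\eta_i^*)-(\eta_i^{k-1}-\eta_i^*)\bigr).
\end{equation*}
Squaring and taking conditional expectation, Assumption \ref{ass2} combined with Assumption \ref{ass1} gives the standard co-coercivity bound $-2\alpha_i^k\tfrac{\mu_iL_i}{\mu_i+L_i}\|x_i^k-x_i^*\|^2$ on the cross term (this is where the stepsize restriction $\alpha_i^k\le 1/(\mu_i+L_i)$ enters), while Assumption \ref{ass:var} bounds the gradient-noise contribution by $(\alpha_i^k)^2\delta_i^2$. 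An analogous expansion on \eqref{eqx0-update-1} produces the same type of contraction for $\|x_0^{k+1}-x_0^*\|^2$, except that the Byzantine term $-\alpha_0^k\sum_{j\in\mathcal{B}}(2\eta_j^k-\eta_j^{k-1})$ remains. The key observation is that, whether by the projection \eqref{eq11c} or by the implicit detection argument in the text, every $\eta_j^k$ and $\eta_j^{k-1}$ lies in $[-\lambda,\lambda]^d$, so $\|\sum_{j\in\mathcal{B}}(2\eta_j^k-\eta_j^{k-1})\|^2\le 9\lambda^2 d\,q^2$; this is exactly the source of the $O(\lambda^2q^2)$ floor. The dual tracking piece is handled by the nonexpansiveness of $\mathrm{proj}_\lambda(\cdot)$ applied to \eqref{eq11}, which gives $\|\eta_i^{k+1}-\eta_i^*\|^2\le \|\eta_i^k-\eta_i^*+\tfrac{\beta}{2}((x_i^{k+1}-x_0^{k+1})-(x_i^*-x_0^*))\|^2$ and couples back into the primal bounds.

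Summing the primal bounds across $i\in\mathcal{R}$ together with the master and the dual-tracking inequalities, the $2\eta_i^k-\eta_i^{k-1}$ pattern is arranged so that the mixed terms telescope between three consecutive indices $k-1,k,k+1$. After absorbing positive terms using the stepsize conditions $\alpha_0^k\le 1/(m\beta+ck)$ and $\alpha_i^k\le 1/(\beta+ck)$, the merit function $V^k$ satisfies the promised recursion. A standard induction on $\alpha^k=\Theta(1/k)$ (e.g.\ the Robbins--Siegmund/Chung-type lemma) then yields $\mathbb{E}V^k=O(1/k)+O(\lambda^2q^2)$, which implies \eqref{eq:Vk-bound}.

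The main obstacle will be choosing the dual tracking term so that the $2\eta_i^k-\eta_i^{k-1}$ combination, which mixes the current and previous dual iterates inside the primal update, telescopes cleanly; naive squaring blows the cross terms up and destroys the contraction. A secondary difficulty is verifying that all the Young-inequality splittings used to separate the $\eta$-mismatch, stochastic-gradient noise, and Byzantine perturbation are compatible with the single stepsize schedule in the statement, which is the reason the theorem must simultaneously enforce $\alpha^k\le 1/(\mu_i+L_i)$, $\alpha^k\le 1/(\mu_0+L_0)$, and the $1/(ck+\cdot)$ decay.
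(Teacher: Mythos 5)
Your overall architecture is the paper's: the same KKT characterization ($\mathbb{E}[F'(x_i^*,\xi_i)]=-\eta_i^*$, $f_0'(x_0^*)=\sum_{i\in\mathcal{R}}\eta_i^*$, $\eta_i^*\in[-\lambda,\lambda]^d$, Lemma \ref{thm:opt}), the same co-coercivity contraction requiring $\alpha\le 1/(\mu+L)$, the same $\|\sum_{j\in\mathcal{B}}(2\eta_j^k-\eta_j^{k-1})\|^2\le 9\lambda^2dq^2$ bound split off by a Young inequality at order $\alpha_0^k$ (which is what makes the floor non-vanishing), and the same Lyapunov function $V^k$ augmented with $\frac{2\alpha_i^{k-1}}{\beta}\|\eta_i^{k-1}-\eta_i^*\|^2$ so the dual cross terms telescope.

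The one step that does not close as written is your dual-tracking inequality. Plain nonexpansiveness of $\mathrm{proj}_\lambda$ applied to \eqref{eq11} (using that $\eta_i^*$ is a fixed point since $x_i^*=x_0^*$) gives
\begin{equation*}
\|\eta_i^{k+1}-\eta_i^*\|^2\le\|\eta_i^k-\eta_i^*\|^2+\beta\big\langle \eta_i^k-\eta_i^*,\,x_i^{k+1}-x_0^{k+1}\big\rangle+\tfrac{\beta^2}{4}\|x_i^{k+1}-x_0^{k+1}\|^2,
\end{equation*}
and the trailing quadratic term, after multiplying by the $\alpha_0^k/\beta$ weight in the Lyapunov function and summing over $i\in\mathcal{R}$, contributes roughly $\alpha_0^k\frac{r\beta}{2}\big(\|x_0^k-x_0^*\|^2+\ldots\big)$, which can only be absorbed into the contraction $\alpha_0^k\frac{\mu_0L_0}{\mu_0+L_0}\|x_0^k-x_0^*\|^2$ if $\beta$ is small relative to the strong-convexity/smoothness constants — a restriction the theorem does not impose ($\beta>0$ is arbitrary). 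Note also that this version pairs the \emph{old} dual error $\eta_i^k-\eta_i^*$ with the \emph{new} primal gap, whereas the recursion needs $\langle\eta_i^k-\eta_i^*,x_i^k-x_0^k\rangle$ with matched indices. The fix is to use the \emph{firm} nonexpansiveness (variational characterization) of the projection, which yields exactly $\langle\eta_i^{k+1}-\eta_i^*,x_0^{k+1}-x_i^{k+1}\rangle\le-\frac{2}{\beta}\langle\eta_i^{k+1}-\eta_i^*,\eta_i^{k+1}-\eta_i^k\rangle$ with no residual quadratic term and a free $-\|\eta_i^{k+1}-\eta_i^k\|^2$; the paper derives this same inequality (Lemma \ref{lem:eta}, \eqref{eq:lem:eta}) from the subgradient monotonicity of the TV term at $z^{k+1}$ versus $z^*$. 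You also gloss over the $\alpha_i^k\ne\alpha_0^k$ mismatch in the cross term, which the paper handles by splitting off an $(\alpha_i^k-\alpha_0^k)$ piece via Young's inequality together with the sign property $\langle\eta_i^k-\eta_i^{k-1},x_i^k-x_0^k\rangle\ge0$ of the projection; this is a routine but necessary repair to make the telescoping weights consistent with the Lyapunov function.
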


\begin{proof}
See \ref{app:conv}.
\end{proof}

Theorem \ref{thm:conv} guarantees that if we choose the stepsizes for both the workers and the master in the order of $O(1/k)$, then the Byzantine-robust stochastic ADMM asymptotically approaches to the $O(\lambda^2q^2)$ neighborhood of the optimal solution $\tilde{x}^*$ of \eqref{eq2} (which equals $x_0^*$ and $x_i^*$, according to Lemma \ref{Coro:1}) in an $O(1/k)$ rate. Note that the $O(1/k)$ stepsizes are sensitive to their initial values \cite{Liping2018}. Therefore, we set the $O(1/\sqrt{k})$ stepsizes in the numerical experiments. We also provide in \ref{app:sqk} an ergodic convergence rate of $O(\log k/\sqrt{k})$ with $O(1/\sqrt{k})$ stepsizes.

In \eqref{eq:Vk-bound}, the asymptotic learning error is in the order of $O(\lambda^2q^2)$, which is the same as that of RSA \cite{Liping2018}. When more Byzantine workers are present, $q$ is larger and the asymptotic learning error increases. Using a larger $\lambda$ helps consensus as indicated in Lemma \ref{Coro:1}, but incurs higher asymptotic learning error. In the numerical experiments, we will imperially demonstrate the influence of $q$ and $\lambda$.

\subsection{Comparison with RSA: Case Studies}

The proposed Byzantine-robust stochastic ADMM and RSA \cite{Liping2018} solve the same problem, while the former takes advantages of the separable problem structure. Below we briefly discuss the robustness of the two algorithms to different Byzantine attacks.

RSA is relatively sensitive to small perturbations. To perturb the update of $x_0^{k+1}$ in \eqref{eq:rsa-update}, Byzantine worker $j$ can generate malicious $u_j^k$ that is very close to $x_0^k$, but its influence on each dimension is still $\lambda$ or $-\lambda$. Potentially, this attack is able to lead the update to move toward a given wrong direction. In contrast, for the Byzantine-robust stochastic ADMM, small perturbations on $\eta_j^k$ change little on the update of $x_0^{k+1}$ in \eqref{eqx0-update-1}. To effectively attack the Byzantine-robust stochastic ADMM, Byzantine worker $j$ can set each element of $\eta_j^k$ to be $\lambda (-1)^k$, then its influence on each dimension will oscillate between $3\lambda$ and $-3\lambda$. In comparison, the influence of this attack for RSA is just $\lambda$ or $-\lambda$ on each dimension. However, these large oscillations are easy to distinguish by the master through screening the received messages. In addition, it is nontrivial for this attack to lead the update to move toward a given wrong direction.

Developing Byzantine attacks that are most harmful to the Byzantine-robust stochastic ADMM and RSA, respectively, is beyond the scope of this paper. Instead, we give a toy example and develops two Byzantine attacks to justify the discussions above.

\begin{example}\label{ex}
Consider a one-dimensional distributed machine learning task with $r=2$ regular workers (numbered by $1$ and $2$) and $q=1$ Byzantine worker (numbered by $3$). The functions are deterministic and quadratic, with $f_0(x_0)=x_0^2/2$, $F_1(x_1,\xi_1)=(x_1-1)^2/4$, and $F_2(x_2,\xi_2)=(x_2-1)^2/4$. Therefore, $\tilde{x}^*=1/2$ is the minimizer of \eqref{eq2} and $\lambda_0=1/4$ by Lemma \ref{Coro:1}. The local primal variables are initialized as their local optima, i.e., $x_0^0=0$ and $x_1^0=x_2^0=1$ for both algorithms. The local dual variables of the Byzantine-robust stochastic ADMM are initialized as $\eta^{-1}_i=\eta_i^0=0$ for $i\in\{1,2,3\}$. We construct two simple attacks.

\noindent \textbf{Small value attack.} Byzantine worker $3$ generates $u_3^k = x_0^k - \frac\epsilon{\max\{k(k+1),1\}}$, where $\epsilon>0$ is a perturbation parameter.

\noindent \textbf{Large value attack.} Byzantine worker $3$ generates $u_3^k = x_0^k - \frac{4\lambda}{\beta}(-1)^k$.

\end{example}

%

We choose the parameters $\lambda=\lambda_0=1/2$ and $\beta=1$, with stepsizes $\alpha_0^k= \frac1{k/8+3}$ and $\alpha_1^k=\alpha_2^k=\frac1{k/8+1}$. The perturbation parameter is set as $\epsilon=1/2$ for the small value attack. Figure \ref{fig_ex} shows the values of the local primal variables on the master and the regular workers. For both algorithms and both attacks, the master and the regular workers are able to asymptotically reach consensus as asserted by Lemma \ref{Coro:1}. 
Under the small value attack, RSA has larger asymptotic learning error than the Byzantine-robust stochastic ADMM as we have discussed, while under the large value attack, both  algorithms coincidentally have zero asymptotic learning errors. In addition, we can observe that the Byzantine-robust stochastic ADMM is more stable than RSA under both attacks.

%

\begin{figure}
\centering
\subfigure[Small value attack]{
\label{fig_exa}
\includegraphics[width=0.45\textwidth]{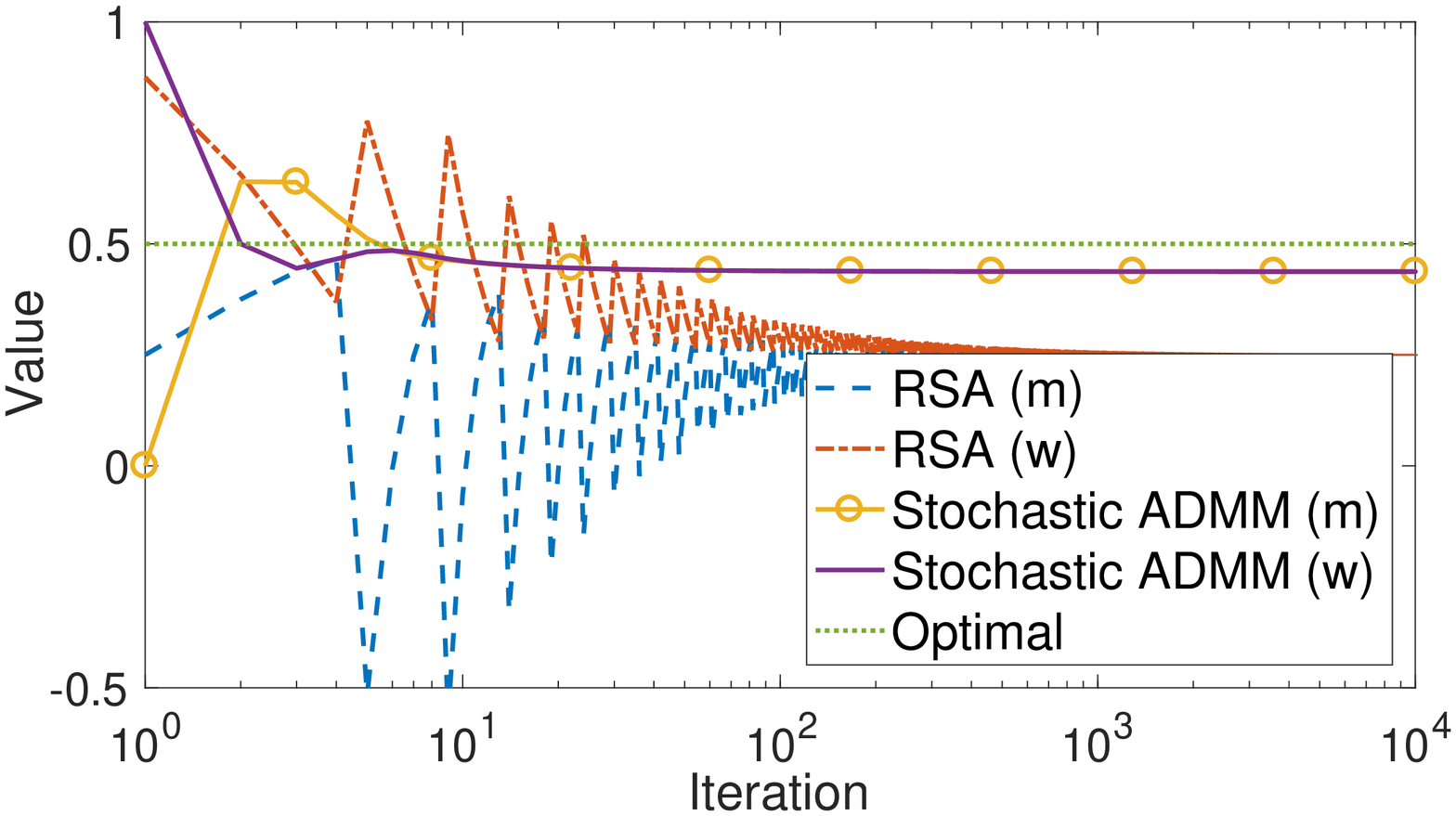}}
\subfigure[Large value attack]{
\label{fig_exb}
\includegraphics[width=0.45\textwidth]{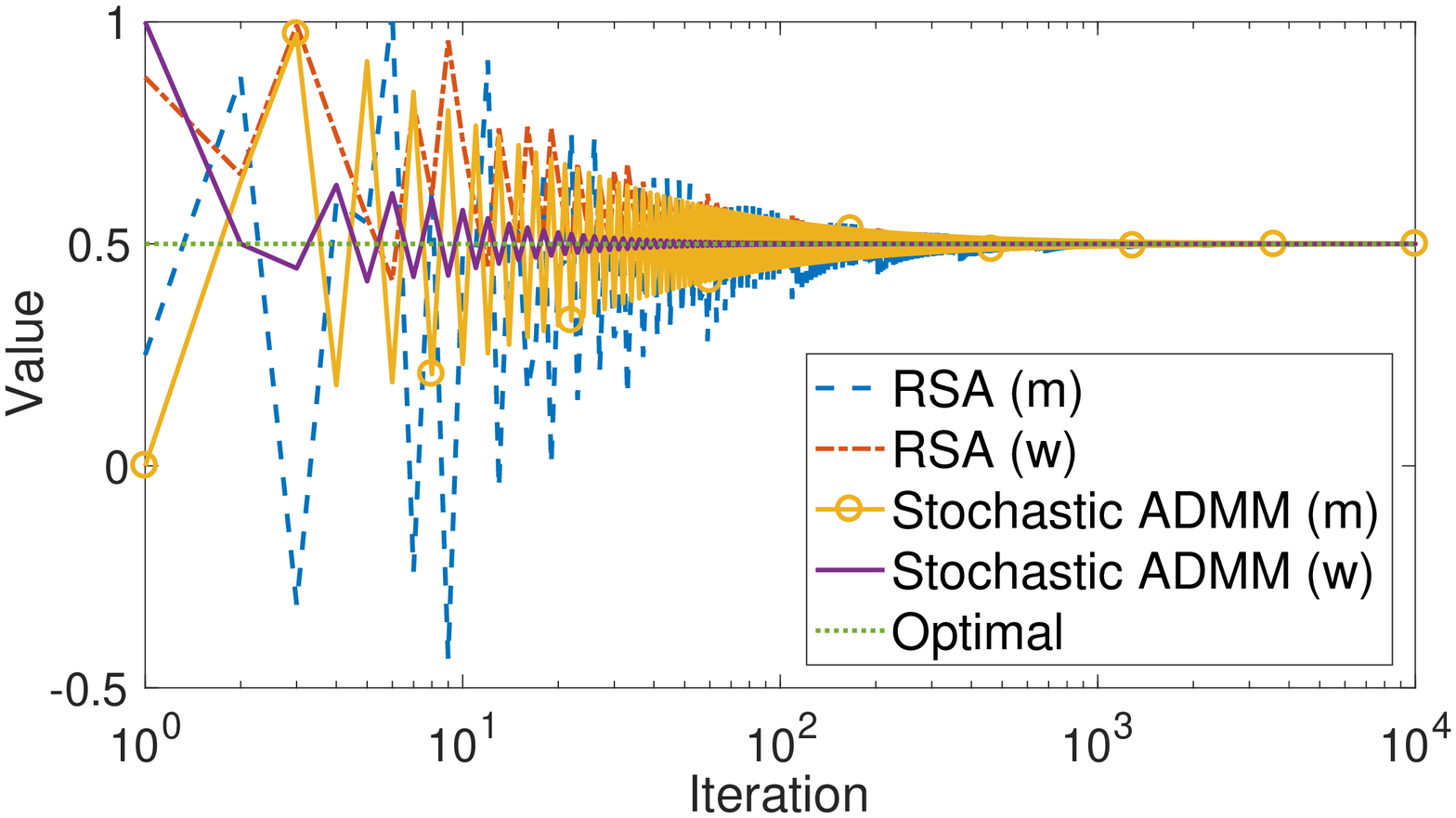}}
\caption{Values of the local primal variables on the master `(m)' and the regular workers `(w)'.}
\label{fig_ex}
\end{figure}

\section{Numerical Experiments}
\label{sec:5}

In this section, we evaluate the robustness of the proposed algorithm to various Byzantine attacks. We compare the proposed Byzantine-robust \textbf{Stochastic ADMM} with the following benchmark algorithms: (i) $\textbf{Ideal SGD}$ without Byzantine attacks; (ii) \textbf{SGD} subject to Byzantine attacks; (iii) \textbf{Geometric median} stochastic gradient aggregation \cite{Chen2018b,Xie2018a}; (iv) \textbf{Median} stochastic gradient aggregation \cite{Chen2018b,Xie2018a}; (v) \textbf{RSA} \cite{Liping2018}. All the parameters of the benchmark algorithms are hand-tuned to the best. Although the stochastic ADMM and RSA are rooted in the same problem formulation \eqref{eq4}, they perform differently for the same value of $\lambda$ due to Byzantine attacks, as we have observed in Example \ref{ex}. Therefore, we hand-tune the best $\lambda$ for the stochastic ADMM and RSA, respectively. In the numerical experiments, we use two datasets, MNIST\footnote{\url{http://yann.lecun.com/exdb/mnist}} and COVERTYPE\footnote{\url{https://www.csie.ntu.edu.tw/~cjlin/libsvmtools/datasets}}. The statistics of these datasets are shown in Table \ref{table1}. We launch one master and 20 workers. In the i.i.d case, we conduct experiments on both datasets by randomly and evenly splitting the data samples to the workers, while in the non-i.i.d. case we only use the MNIST dataset. Each regular worker uses a mini-batch of 32 samples to estimate the local gradient at each iteration. The loss functions $f_i(\tilde{x})$ of workers are softmax regressions, and the regularization term is given by $f_0(\tilde{x}) = \frac{0.01}{2} \Vert \tilde{x} \Vert^2$. Performance is evaluated by the top-1 accuracy.

\begin{table}\centering
	\begin{tabular}{c|c|c|c}
		\hline
		Name  &  Training Samples & Testing Samples & Attributes \\
		\hline
		COVERTYPE & 465264 &  115748 & 54  \\
		\hline
		MNIST & 60000 & 10000 & 784  \\
		\hline
	\end{tabular}
	\caption{Specifications of the datasets.}\label{table1}
\end{table}

\textbf{Gaussian attack}. Under Gaussian attack, at every iteration every Byzantine worker sends to the master a random vector, whose elements follow the Gaussian distribution with standard deviation $100$. Here we set the number of Byzantine workers $q=8$. For Stochastic ADMM on the MNIST dataset, we set $\lambda$ = 0.5,  $\beta = 0.5$, $\alpha_0^k = \frac{1}{10+10\sqrt{k}}$, and $\alpha_i^k = \frac{1}{0.5+10\sqrt{k}}$. As shown in Figure \ref{fig1a}, SGD fails, Stochastic ADMM, RSA and Geometric median perform very similarly and are close to Ideal SGD, while Median is a little worse than the others. On the COVERTYPE dataset, we set $\lambda$ = 0.5, $\beta = 0.1$, $\alpha_0^k = \frac{1}{2+50\sqrt{k}}$, and $\alpha_i^k = \frac{1}{0.1+50\sqrt{k}}$ for Stochastic ADMM. As shown in Figure \ref{fig1b}, SGD performs the worst, while Stochastic ADMM, Geometric median, and median are close to Ideal SGD. Among all the Byzantine-robust algorithms, Stochastic ADMM has the fastest converge speed.
\begin{figure}[ht!]
\centering
\subfigure[MNIST]{
\label{fig1a}
\includegraphics[width=0.45\textwidth]{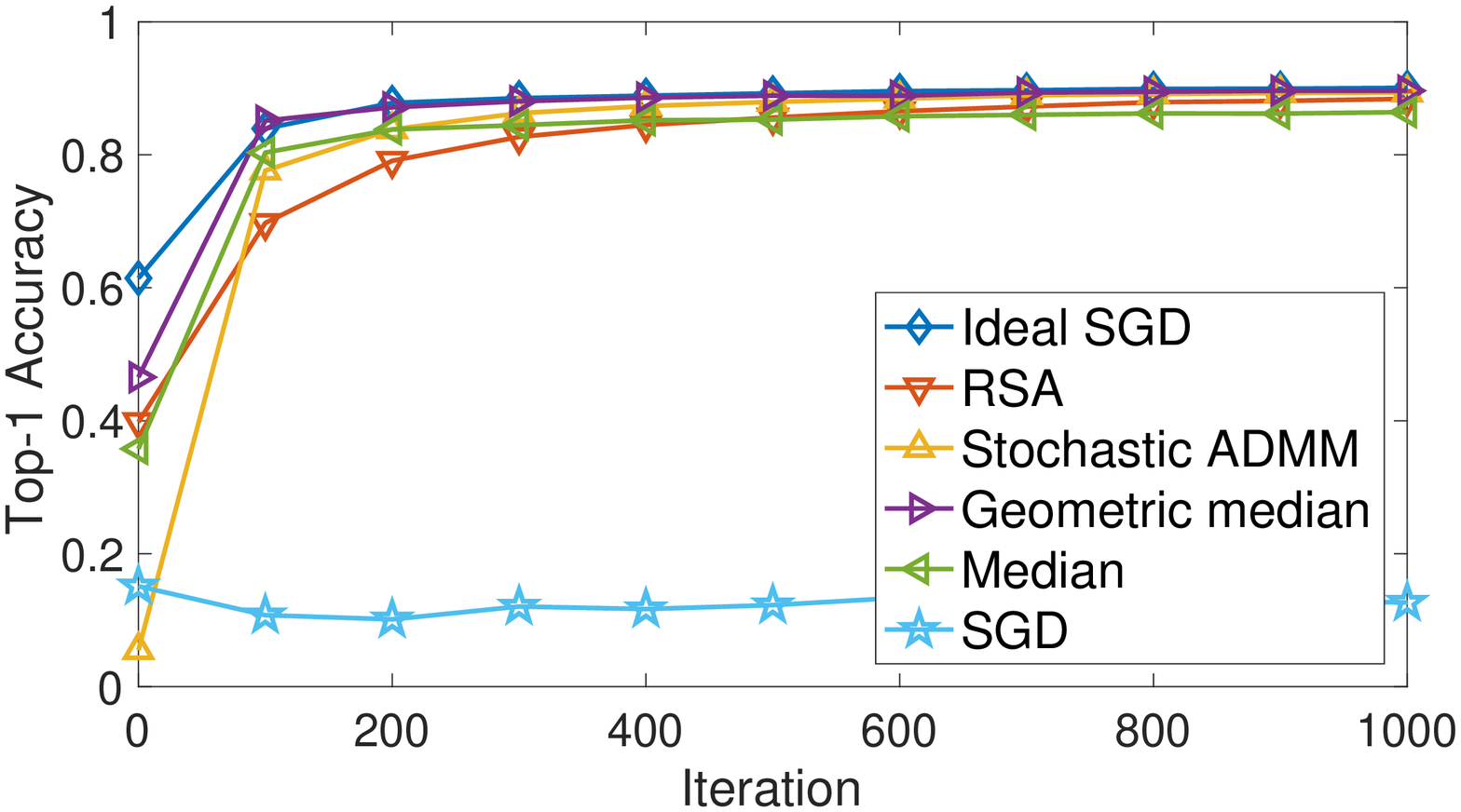}}
\subfigure[COVERTYPE]{
\label{fig1b}
\includegraphics[width=0.45\textwidth]{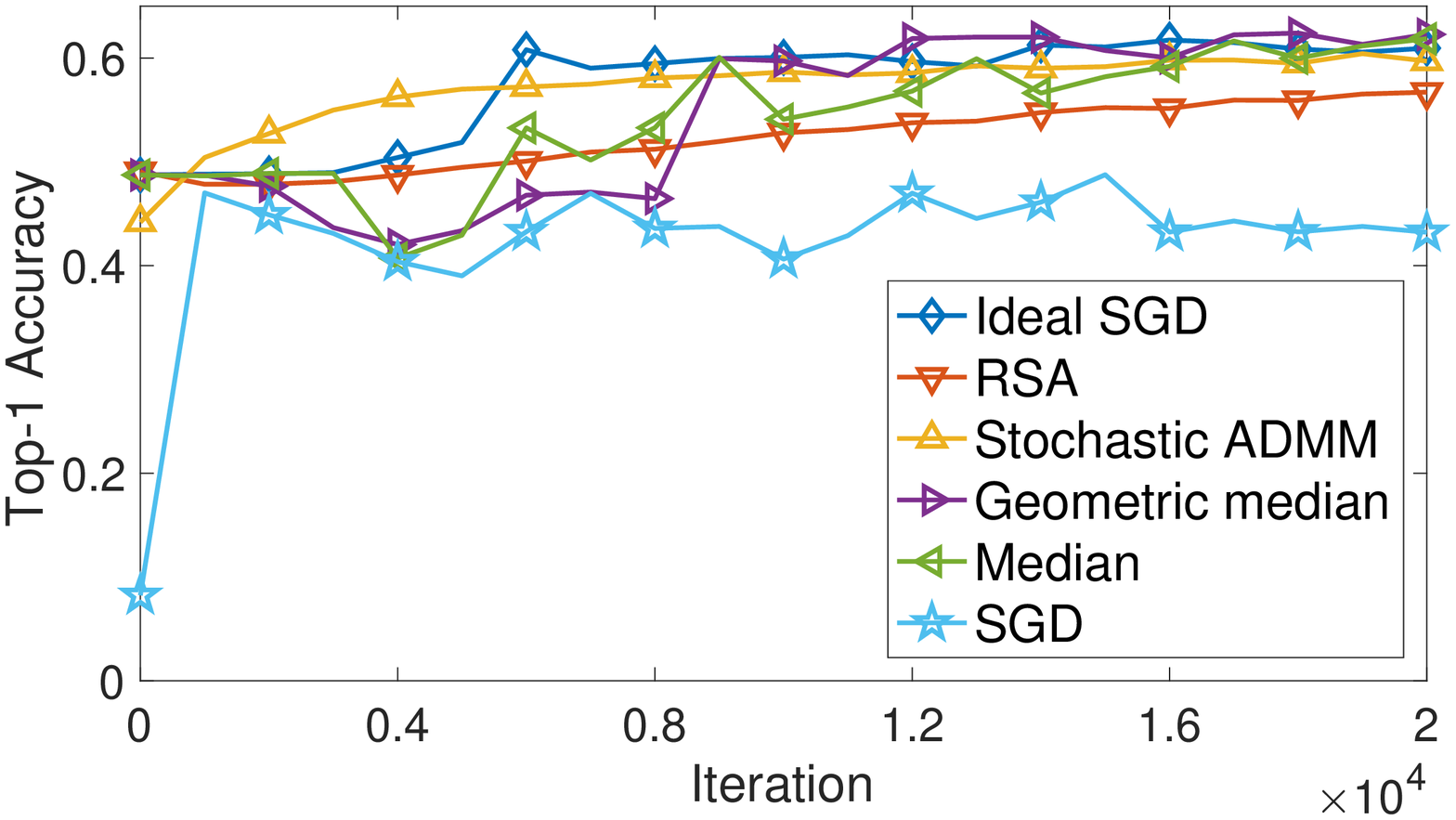}}
\caption{Top-1 accuracy under Gaussian attack when $q = 8$.}
\label{fig1}
\end{figure}

\textbf{Sign-flipping attack}. Under sign-flipping attack, at every iteration every Byzantine worker calculates its local variable, but flips the sign by multiplying with a constant $\varepsilon < 0$, and sends to the master. Here we set $\varepsilon = -3$ and the number of Byzantine workers $q=8$. On the MNIST dataset, we set the parameters of Stochastic ADMM as $\lambda = 0.05$, $\beta = 0.1$, $\alpha_0^k = \frac{1}{2+5\sqrt{k}}$, and $\alpha_i^k = \frac{1}{0.1+5\sqrt{k}}$. Figure \ref{fig2a} shows that SGD also fails in this situation. Stochastic ADMM, RSA, and Geometric median are close to Ideal SGD, and achieve better accuracy than median. In Figure \ref{fig2b}, shows the performance on the COVERTYPE dataset. The parameters of Stochastic ADMM are $\lambda = 0.5$, $\beta = 0.3$, $\alpha_0^k = \frac{1}{6+100\sqrt{k}}$ ,and $\alpha_i^k = \frac{1}{0.3+100\sqrt{k}}$.  Stochastic ADMM and RSA are close to Ideal SGD, while outperform Geometric median and Median.
\begin{figure}[ht!]
\centering
\subfigure[MNIST]{
\label{fig2a}
\includegraphics[width=0.45\textwidth]{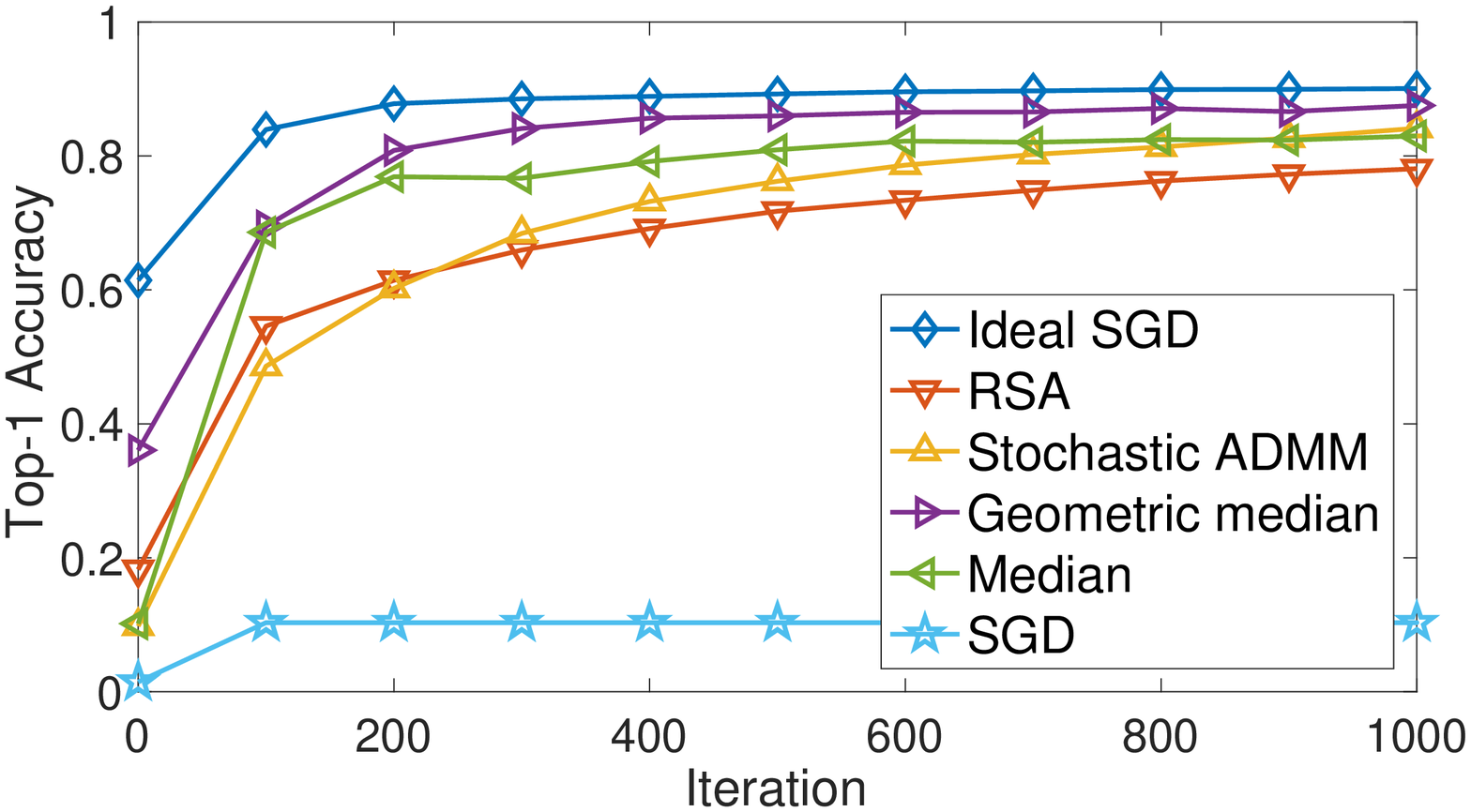}}
\subfigure[COVERTYPE]{
\label{fig2b}
\includegraphics[width=0.45\textwidth]{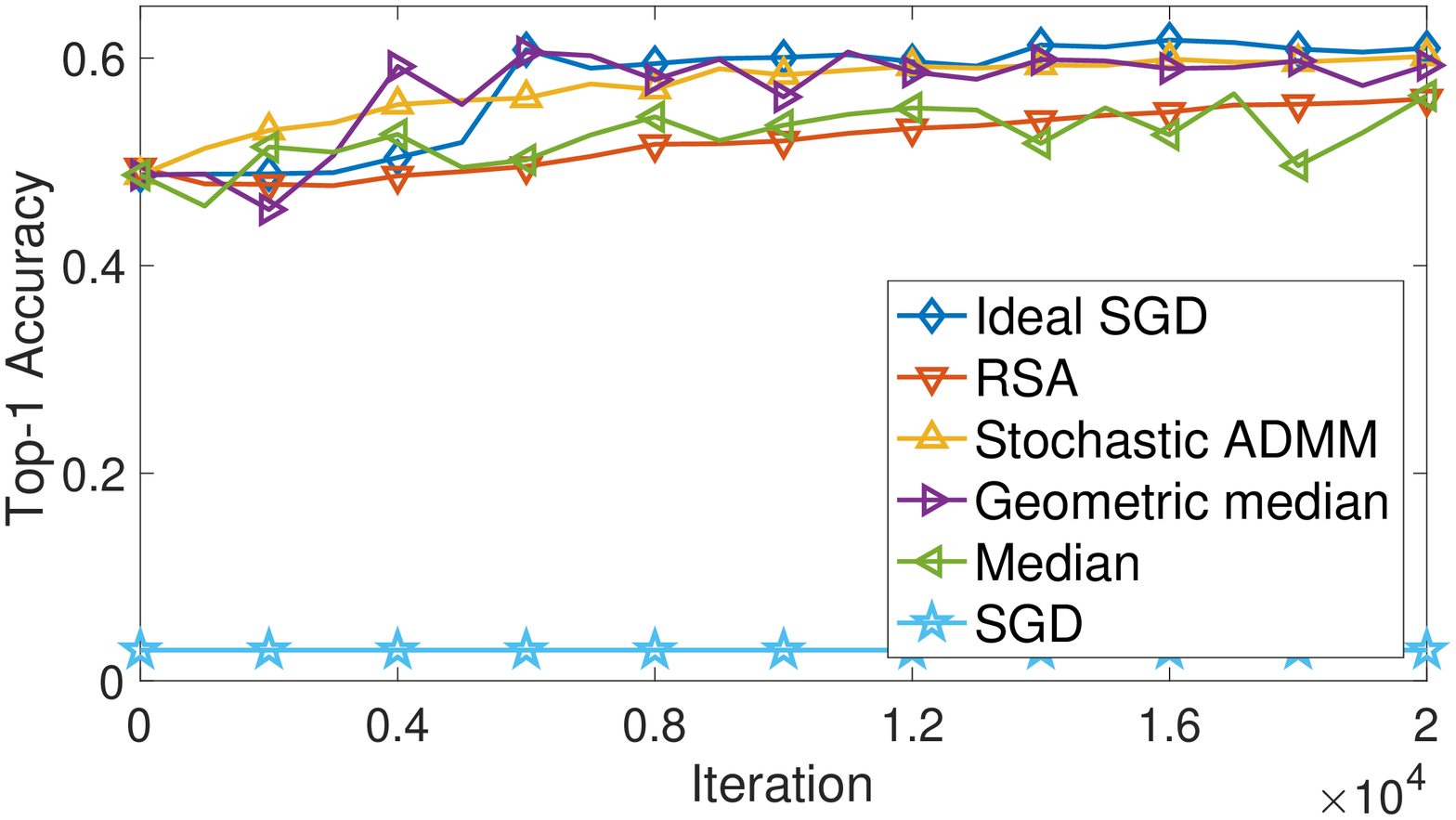}}
\caption{Top-1 accuracy under sign-flipping attack when $q = 8$.}
\label{fig2}
\end{figure}

\textbf{Without Byzantine attack}. We also investigate the case without Byzantine attack in both MNIST and COVERTYPE datasets, as shown in Figure \ref{fig4}. In Figure \ref{fig4a}, Stochastic ADMM on MNIST dataset chooses the parameters $\lambda = 0.5$, $\beta = 0.5$, $\alpha_0 = \frac{1}{10+10\sqrt{k}}$, and $\alpha_i = \frac{1}{0.5+10\sqrt{k}}$. Without Byzantine attack, performance of Stochastic ADMM, RSA, and Geometric median is very similar to Ideal SGD, while Median is worse than the other Byzantine-robust algorithms. On the COVERTYPE dataset, we set the parameters of Stochastic ADMM as $\lambda = 0.5$, $\beta = 0.3$,  $\alpha_0^k = \frac{1}{6+10\sqrt{k}}$, and $\alpha_i^k = \frac{1}{0.3+10\sqrt{k}}$. As shown in Figure \ref{fig4b}, Stochastic ADMM is the best among all the algorithms, and RSA outperforms Geometric median and Median. We conclude that although Stochastic ADMM introduces bias to the updates, it still works well in the attack-free case.
\begin{figure}[ht!]
\centering
\subfigure[MNIST]{
\label{fig4a}
\includegraphics[width=0.45\textwidth]{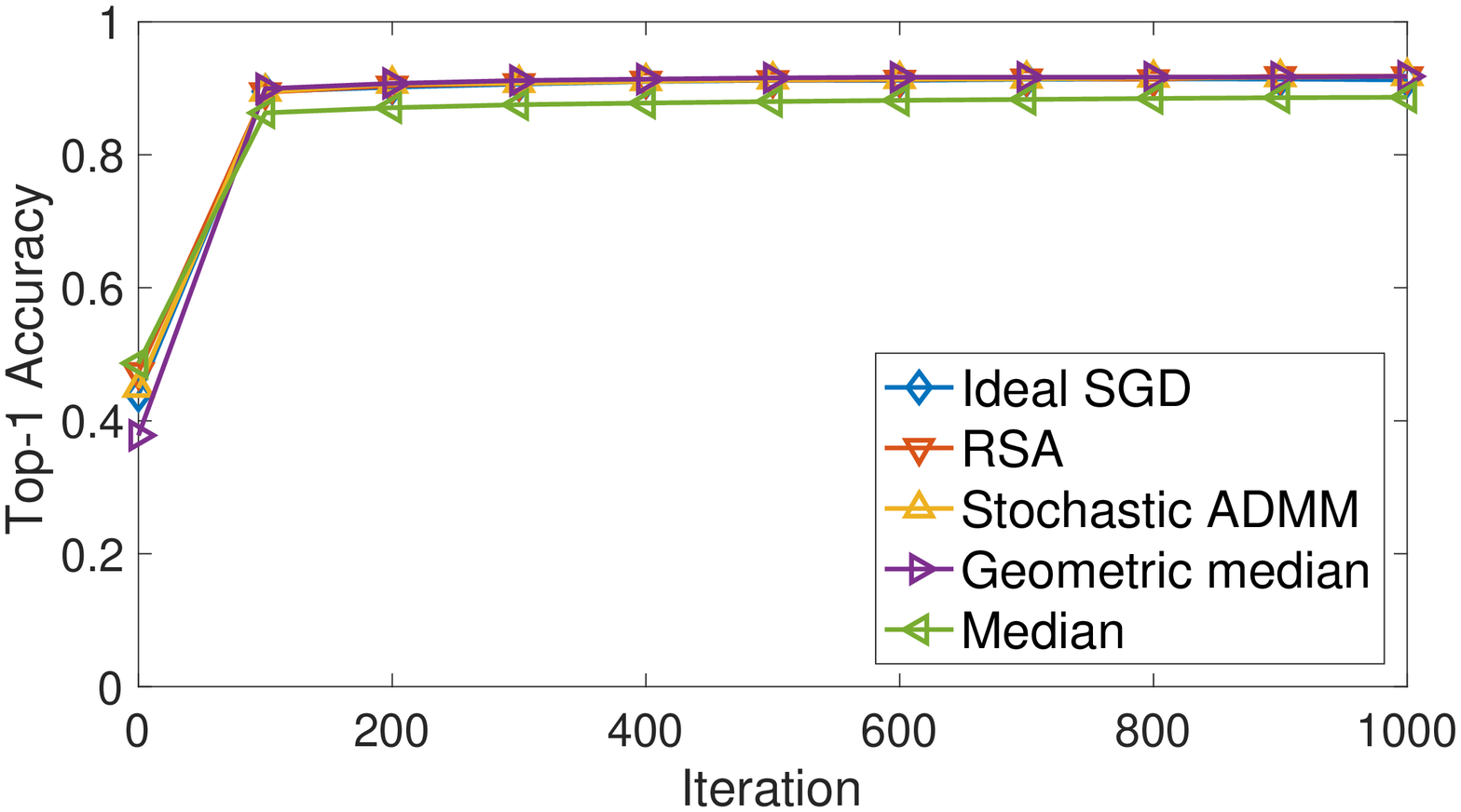}}
\subfigure[COVERTYPE]{
\label{fig4b}
\includegraphics[width=0.45\textwidth]{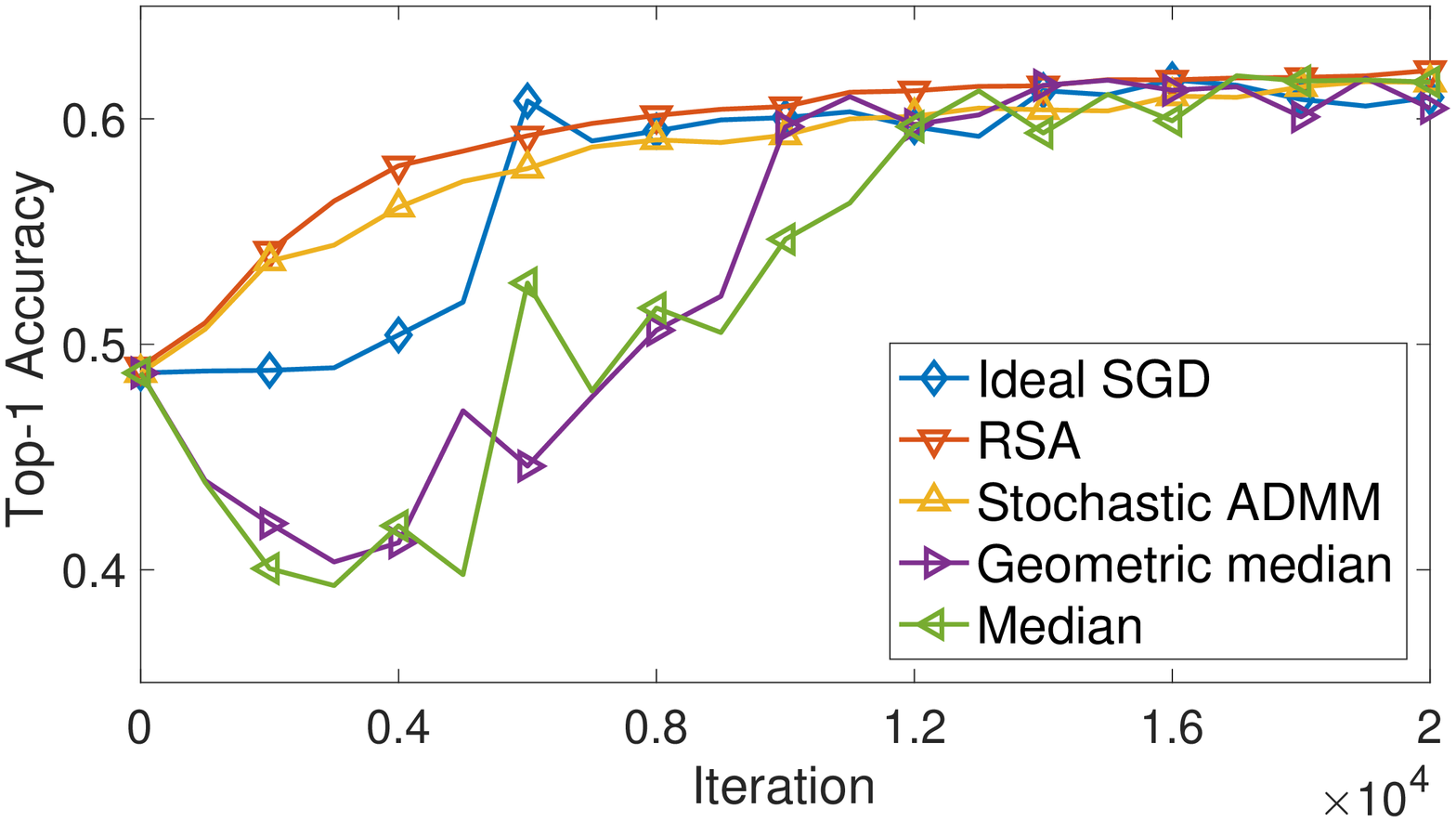}}
\caption{Top-1 accuracy without Byzantine attack.}
\label{fig4}
\end{figure}

\textbf{Impact of $\lambda$.} Here we show how the performance of the proposed algorithm on the two datasets are affected by the choice of the penalty parameter $\lambda$.
We use sign-flipping attack with $\varepsilon = -3$ in the numerical experiments, and the number of Byzantine workers is $q = 4$. The parameters $\beta$, $\alpha_0^k$ and  $\alpha_i^k$ are hand-tuned to the best. As depicted in Figure \ref{fig5}, on both datasets, the performance of Stochastic ADMM degrades when $\lambda$ is too small. The reason is that, when $\lambda$ is too small the regular workers are rely more on their local data, leading to deficiency of the distributed learning system \cite{Liping2018}. Meanwhile, $\lambda$ being too large also leads to worse performance.
\begin{figure}[ht!]
\centering
\subfigure[MNIST]{
\label{fig5a}
\includegraphics[width=0.45\textwidth]{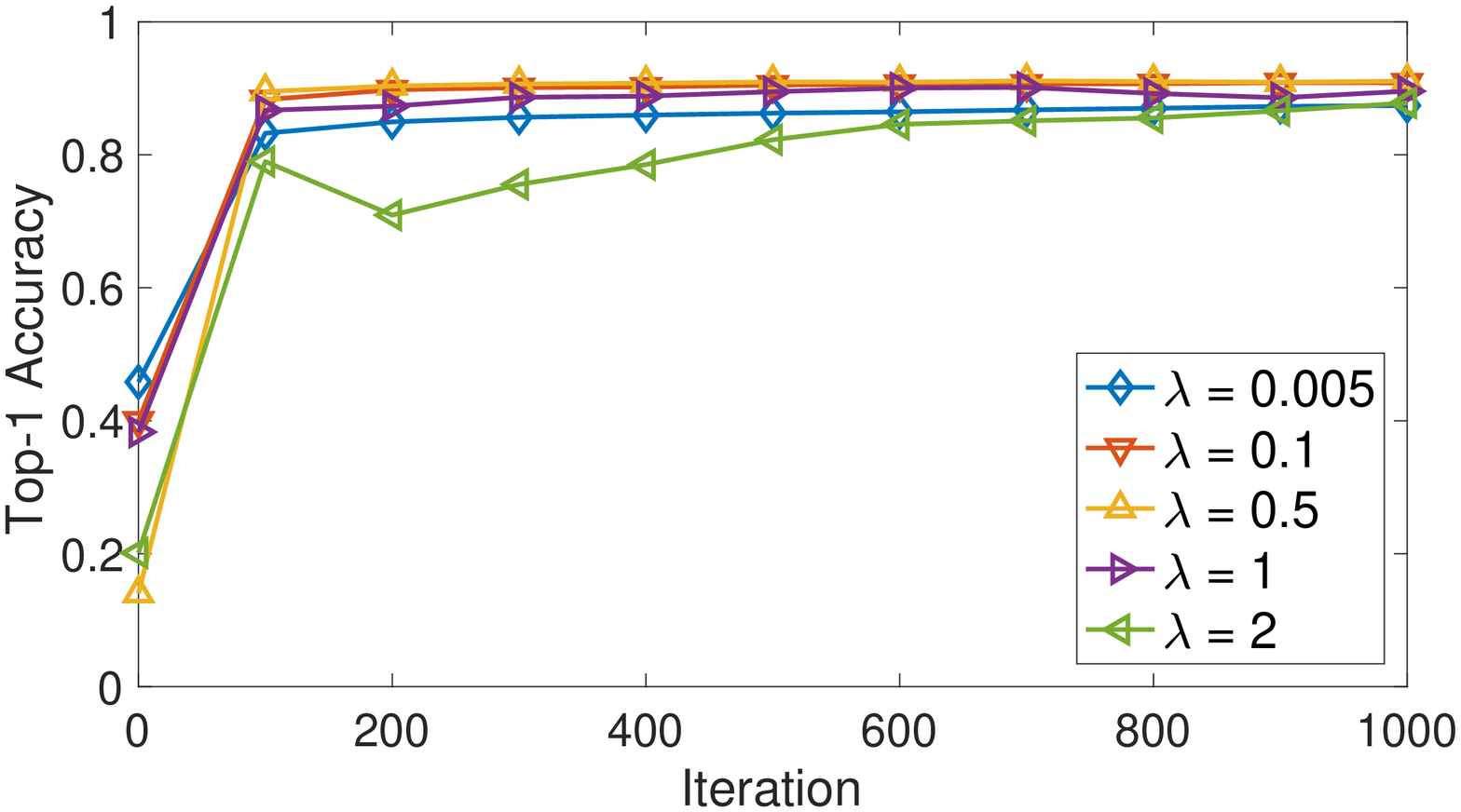}}
\subfigure[COVERTYPE]{
\label{fig5b}
\includegraphics[width=0.45\textwidth]{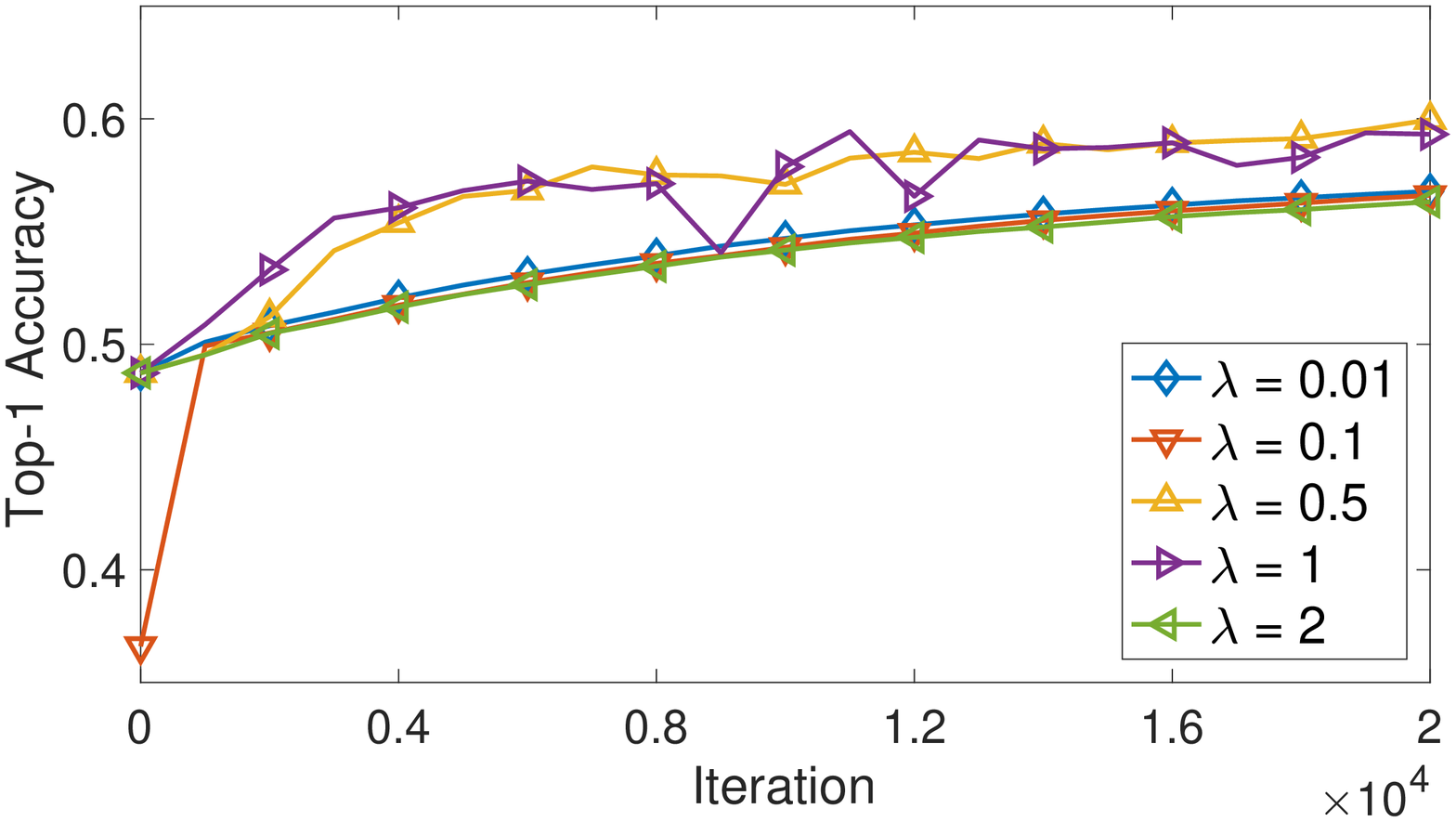}}
\caption{Top-1 accuracy under sign-flipping attacks with different $\lambda$.}
\label{fig5}
\end{figure}

\textbf{Non-i.i.d. data}. To demonstrate the robustness of the proposed algorithm against Byzantine attacks on non-i.i.d. data, we redistribute the MNIST dataset by letting every two workers share one digit. All Byzantine workers $j$ choose one regular worker indexed by $p$, and send $u_j^{k+1} = x_p^{k+1}$ to the master at every iteration $k$. When the number of Byzantine workers is $q = 4$, the best reachable accuracy is around 0.8, because of the absence of two handwriting digits' data. Similarly, when the number of Byzantine workers is $q=8$, the best reachable accuracy is around 0.6. Here, we set the parameters of Stochastic ADMM as $\lambda = 0.8$, $\beta = 0.2$, $\alpha_0^k = \frac{1}{4+10\sqrt{k}}$, and $\alpha_i^k = \frac{1}{0.2+10\sqrt{k}}.$  when $q = 4$. As shown in Figure \ref{fig3a}, Median fails, Stochastic ADMM are close to Ideal SGD and outperforms all the other Byzantine-robust algorithms. When the number of Byzantine worker increases to $q=8$, in Stochastic ADMM, we set  $\lambda = 0.5$, $\beta = 3$, $\alpha_0^k = \frac{1}{60+500\sqrt{k}}$, and $\alpha_i^k = \frac{1}{3+500\sqrt{k}}$. As depicted in Figure \ref{fig3b}, Geometric median and Median fail because the stochastic gradients of regular worker $p$ dominate, such that only one digit can be recognized. Stochastic ADMM and RSA both work well, but Stochastic ADMM converges faster than RSA.
\begin{figure}[ht!]
\centering
\subfigure[$q=4$]{
\label{fig3a}
\includegraphics[width=0.45\textwidth]{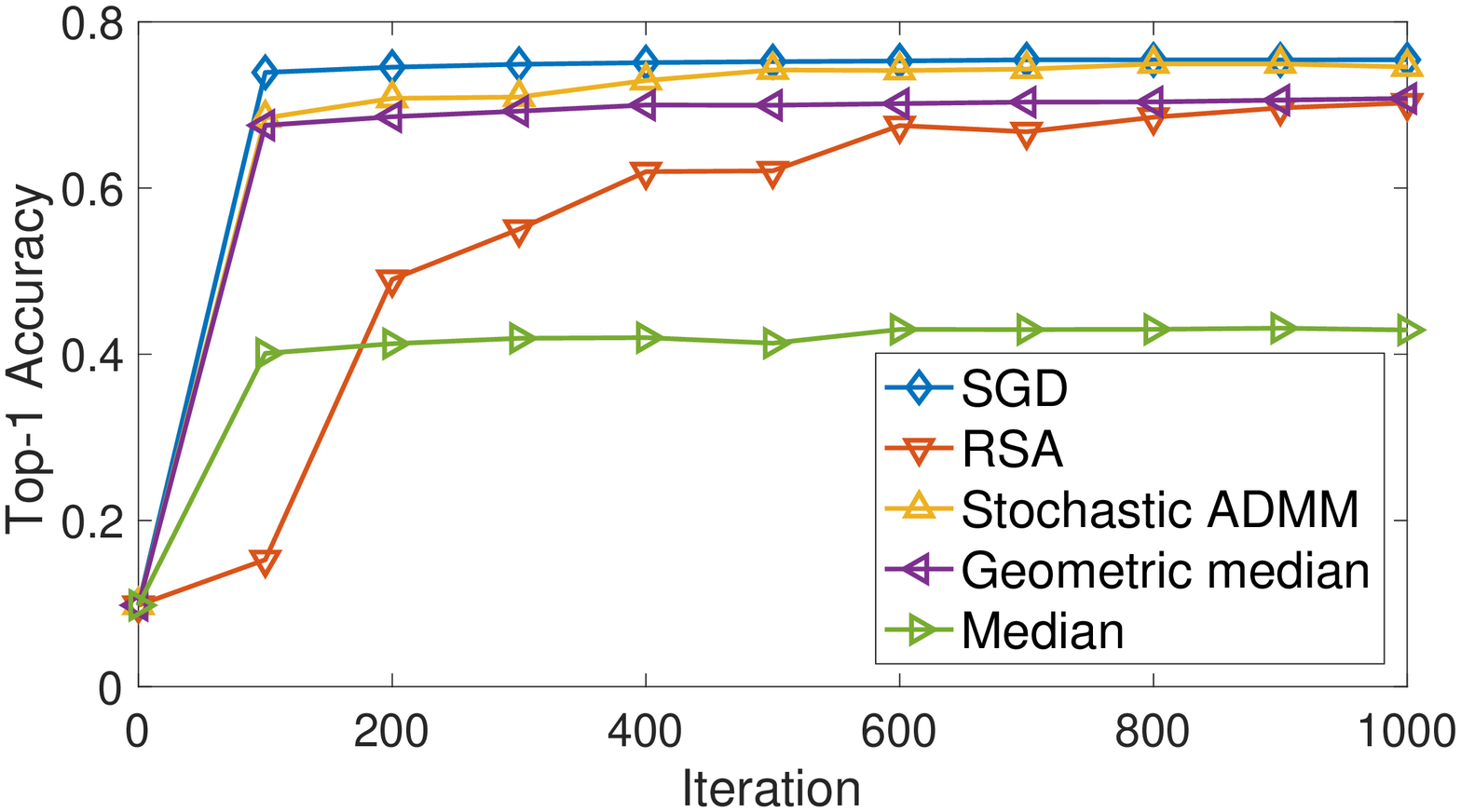}}
\subfigure[$q=8$]{
\label{fig3b}
\includegraphics[width=0.45\textwidth]{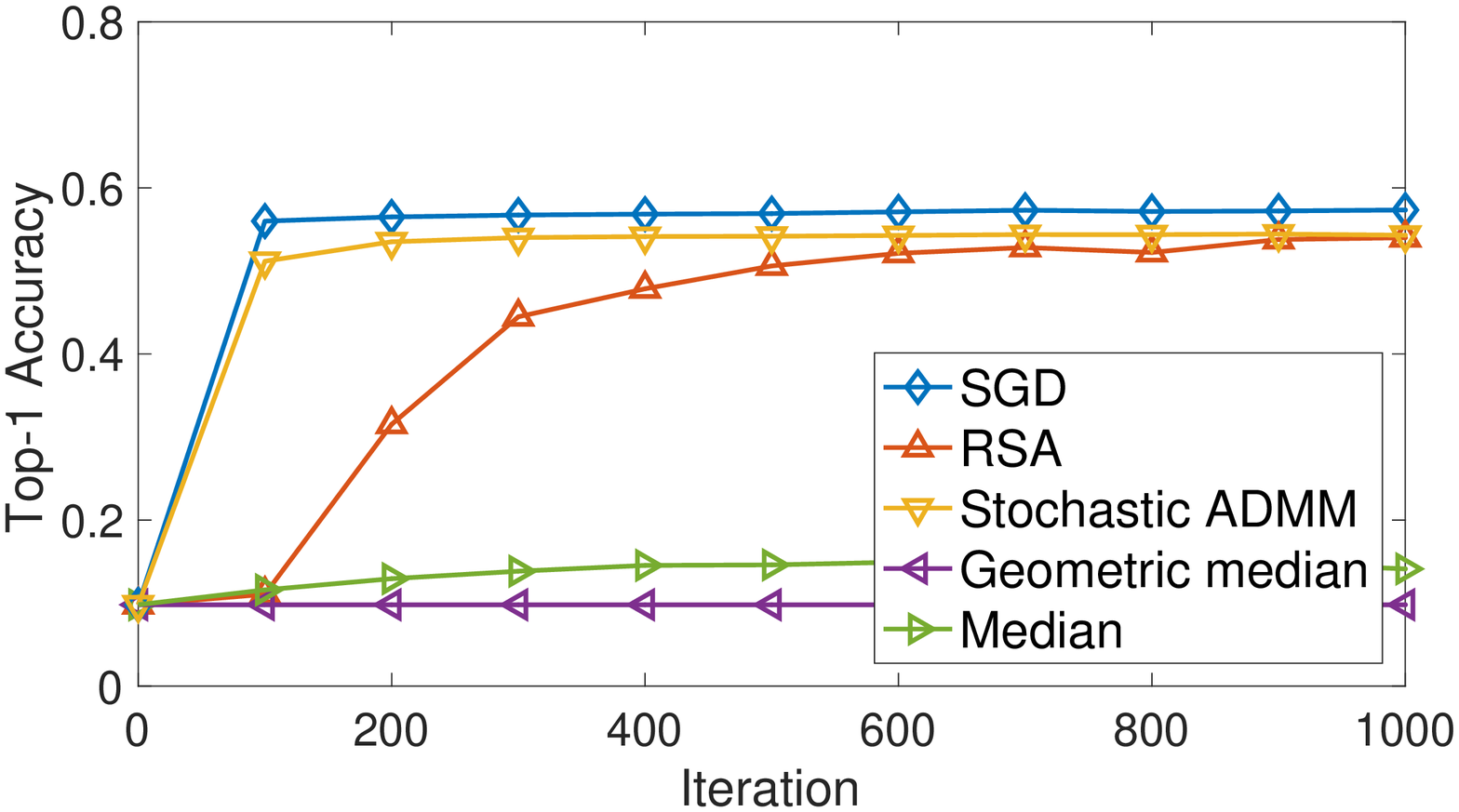}}
\caption{Top-1 accuracy under non-i.i.d. data.}
\label{fig3}
\end{figure}

\section{Conclusions}
\label{sec:6}

We proposed a stochastic ADMM to deal with the distributed learning problem under Byzantine attacks. We considered a TV norm-penalized approximation formulation to handle Byzantine attacks. Theoretically, we proved that the stochastic ADMM converges in expectation to a bounded neighborhood of the optimum at an $O(1/k)$-rate under mild assumptions. 
Numerically, we compared the proposed algorithm with other Byzantine-robust algorithms on two real datasets, showing the competitive performance of the Byzantine-robust stochastic ADMM.

\vspace{1em}

\noindent \textbf{Acknowledgement.} Qing Ling is supported in part by NSF China Grant 61973324, Fundamental Research Funds for the Central Universities, and Guangdong Province Key Laboratory of Computational Science Grant 2020B1212060032. A preliminary version of this paper has appeared in {\it IEEE International Conference on Acoustics, Speech, and Signal Processing}, Barcelona, Spain, May 4--8, 2020.

\clearpage

\clearpage

\appendix
\section{Proof of Proposition \ref{lem:simp}}\label{app:lem1}

The proof of Proposition \ref{lem:simp} relies on the following Lemma.

\begin{lemma}\label{lem:cal}
Let $\{z_1^*,z_2^*\}={\arg\min}_{z_1,z_2\in\mathbb{R}} ~ \lambda|z_1-z_2|+\frac12(z_1-a_1)^2+\frac12(z_2-a_2)^2$, where $\lambda, a_1, a_2 \in \mathbb{R}$. Then \begin{equation}z_1^*+z_2^*=a_1+a_2 \quad \text{and} \quad z_1^*-a_1=\mathrm{proj}_\lambda\left(\frac{a_2-a_1}2\right).\label{lem.in.A}\end{equation}
\end{lemma}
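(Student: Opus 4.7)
The plan is to exploit the fact that the nonsmooth term $\lambda|z_1-z_2|$ depends only on the difference $z_1-z_2$, whereas the quadratics depend on each coordinate separately. Because the objective is convex in $(z_1,z_2)$, first-order optimality is equivalent to the subgradient inclusions
\[
0 \in (z_1^*-a_1) + \lambda\,\partial|z_1^*-z_2^*|, \qquad
0 \in (z_2^*-a_2) - \lambda\,\partial|z_1^*-z_2^*|,
\]
where the two subgradient sets $\partial|\cdot|$ coincide and equal $\{\mathrm{sgn}(z_1^*-z_2^*)\}$ if $z_1^*\ne z_2^*$ and equal $[-1,1]$ if $z_1^*=z_2^*$. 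Picking a common element $s$ from this set and using the two inclusions, the sum identity $z_1^*+z_2^*=a_1+a_2$ falls out immediately by adding the two equations, which kills the $\pm\lambda s$ contributions.

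For the second identity, I would reduce the problem to one variable by eliminating $z_2^*$ using the sum identity. Writing $z_2^*=a_1+a_2-z_1^*$ makes $(z_2^*-a_2)^2=(z_1^*-a_1)^2$, and setting $t:=z_1^*-a_1$ and $b:=a_2-a_1$ reduces the minimization to
\[
\min_{t\in\mathbb{R}} \; \lambda\,|2t-b| + t^2.
\]
This is a standard scalar soft-thresholding-style problem. I would solve it by case analysis on $b$: the stationarity condition $t=-\lambda\,\mathrm{sgn}(2t-b)$ gives $t=-\lambda$ when $b<-2\lambda$, $t=\lambda$ when $b>2\lambda$, and on the ``kink'' $2t=b$ the subgradient condition $-b\in 2\lambda[-1,1]$ holds precisely when $|b|\le 2\lambda$, in which case $t=b/2$. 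Combining these three cases is exactly the definition of the clip $\mathrm{proj}_\lambda(b/2)$, yielding $z_1^*-a_1=\mathrm{proj}_\lambda((a_2-a_1)/2)$.

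The only real subtlety is handling the kink $z_1^*=z_2^*$, since then $\partial|z_1^*-z_2^*|=[-1,1]$ is set-valued; I would simply make the common subgradient element $s$ explicit and appeal to convex subdifferential calculus throughout. Everything else is a direct calculation, so I do not expect any further obstacles.
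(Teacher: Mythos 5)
Your proof is correct and follows essentially the same route as the paper's: both first establish $z_1^*+z_2^*=a_1+a_2$ and then reduce to a one-dimensional soft-thresholding/clipping computation whose three cases assemble into $\mathrm{proj}_\lambda\left(\frac{a_2-a_1}{2}\right)$. The only organizational difference is that the paper frames this as a bi-level minimization over the difference $\Delta=z_1-z_2$ (the inner KKT conditions yield the sum identity, the outer ones yield the clip), whereas you read the sum identity directly off the joint subgradient conditions and then eliminate $z_2^*$; the resulting scalar problems are related by an affine change of variables, and your treatment of the kink via the set-valued subdifferential is handled correctly.
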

\begin{proof}
Note that $\{z_1^*,z_2^*\}$ together with their difference $\Delta^* = z_1^*-z_2^*$ is also optimal to the bi-level minimization problem
\begin{align}
\min_{\Delta \in \mathbb{R}} & \min_{z_1,z_2\in\mathbb{R}} ~ \lambda|z_1-z_2|+\frac12(z_1-a_1)^2+\frac12(z_2-a_2)^2, \nonumber \\
                                                         & \quad s.t.  \quad z_1-z_2 = \Delta. \nonumber
\end{align}
That is, we first optimize the constrained minimization problem with an artificially imposed constraint $z_1-z_2 = \Delta$ over $\{z_1,z_2\}$, and then optimize over $\Delta$.

For the inner-level constrained minimization problem, from its KKT (Karush-Kuhn-Tucker) conditions we know the minimizer is $\{z_1^*=\frac{a_1+a_2+\Delta}2, z_2^*=\frac{a_1+a_2-\Delta}2\}$, and accordingly, the optimal value is $\lambda |\Delta| + \frac{(\Delta - (a_1-a_2))^2}{4}$. Therefore, for the outer-level unconstrained minimization problem, from its KKT conditions we know the minimizer can be written as $\Delta^* = (a_1-a_2) - \mathrm{proj}_{2\lambda}(a_1-a_2)$. Substituting this result to $\{z_1^*=\frac{a_1+a_2+\Delta^*}2, z_2^*=\frac{a_1+a_2-\Delta}2^*\}$ yields $z_1^*=a_1+\mathrm{proj}_\lambda\left(\frac{a_2-a_1}2\right)$ and $z_2^*=a_2+\mathrm{proj}_\lambda\left(\frac{a_1-a_2}2\right)$. From them we obtain \eqref{lem.in.A} and complete the proof.
\end{proof}

Now we begin to prove Proposition \ref{lem:simp}. Since \eqref{eq8b} is separable with respect to $i$, it is equivalent to
\begin{align}\label{eq9}
&~\left\{z^{k+1}(i,0), z^{k+1}(0,i)\right\}\\
=&~{\arg\min}_{z(i,0), z(0,i)} \lambda\| z(0,i)-z(i,0)\|_1 +\frac{\beta}{2}\big\| z(i,0)-\big(x_0^{k+1} - \frac1\beta\eta^k(i,0) \big) \big\|^2 + \frac{\beta}{2}\big\| z(0,i)- \big(x_i^{k+1}-\frac1\beta\eta^k(0,i)\big)\big\|^2,\notag
\end{align}
According to Lemma \ref{lem:cal}, \eqref{eq9} leads to
\begin{subequations}
\begin{align}
  {z^{k+1}(0,i)+z^{k+1}(i,0)} & = {x_i^{k+1}+x_0^{k+1}}-\frac{\eta^k(0,i)+\eta^k(i,0)}{\beta},  \label{eq10a}\\
  z^{k+1}(i,0)-\big(x_0^{k+1} - \frac1\beta\eta^k(i,0) \big) & = \mathrm{proj}_{\lambda / \beta}\left(\frac{(\eta^k(i,0)-\eta^k(0,i))/\beta + x_i^{k+1}-x_0^{k+1}}2 \right). \label{eq10b}
\end{align}
\end{subequations}

From \eqref{eq8c} and \eqref{eq10a} we have
\begin{equation}\label{equua}
  \eta^{k+1}(0,i)+\eta^{k+1}(i,0) = 0,
\end{equation}
which is regardless of the initialization of $\eta$.
If we further initialize $\eta^0(i,0)=-\eta^0(0,i)$, for simplicity we can define for any $k\ge0$ that $$\eta^k_i:=\eta^k(i,0)=-\eta^k(0,i).$$
With this notation, we rewrite \eqref{eq8c} as
\begin{align}
\eta^{k+1}_i = \eta^{k}_i + \beta(z^{k+1}(i,0)-x_0^{k+1})  = \mathrm{proj}_{\lambda}\left(\eta^k_i+\frac{\beta}{2}(x^{k+1}_i-x^{k+1}_0) \right),\label{eta-upd}
\end{align}
where the last equality is from \eqref{eq10b}.

Next we simplify the updates of $x_i^{k+1}$ and $x_0^{k+1}$ in \eqref{eqxx}. From \eqref{eq8c}, we have
\begin{align}
\beta x_i^k - \beta z^k(0,i) - \eta^k(0,i) &= \eta^{k-1}(0,i) - \eta^k(0,i) - \eta^k(0,i) = 2\eta^k_i-\eta^{k-1}_i,\notag
\end{align}
which simplifies \eqref{eqxx} to
\begin{align}
x^{k+1}_i  &= x_i^k -\alpha_i^k \left( F'(x_i^k,\xi^k_i) + 2\eta^k_i-\eta^{k-1}_i \right), \\
x^{k+1}_0  &= x_0^k -\alpha_0^k \Big( f_0'(x_0^k) - \sum_{i \in \mathcal{R}} (2\eta^k_i-\eta^{k-1}_i ) \Big).
\end{align}
This completes the proof. \qed

\section{Supporting Lemmas}\label{app:opt}
\begin{lemma}[\textbf{Optimality conditions of \eqref{eq5}}]
\label{thm:opt}
The sufficient and necessary optimality conditions of \eqref{eq5} are
\begin{equation}
\label{eqkkt}
\begin{cases}
\mathbb{E}[F'({x_i^*}, \xi_i)]=\eta^*(0,i),\quad
f'_0(x_0^*)=\sum_{i \in \mathcal{R}} \eta^*(i,0),\\
\eta^*(0,i) = - \lambda g_i^*,\quad
\eta^*(i,0) = \lambda g_i^*,\\
z^*(i,0)=x_0^*,\quad
z^*(0,i)=x_i^*,
\end{cases}
\end{equation}
for all $i \in \mathcal{R}$, where $g_i^*\in{sgn}(z^*(0,i)-z^*(i,0))$.
In particular, defining $\eta_i^*:=\eta^*(i,0)$, we have for all $i \in \mathcal{R}$ that
\begin{align}
f'_0(x_0^*) = \sum_{i \in \mathcal{R}} \eta_i^*, \quad
\mathbb{E}[F'({x_i^*}, \xi_i)] = - \eta_i^*, \quad \text{and} \quad \eta_i^*=\lambda g_i^*\in[-\lambda,\lambda]^d. \label{eq:opt}
\end{align}
\end{lemma}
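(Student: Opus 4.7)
The plan is to derive the stated KKT system directly from the Lagrangian of problem \eqref{eq5}, which is convex (sum of strongly convex functions and a convex $\ell_1$ penalty subject to linear equality constraints), so that the KKT conditions are both necessary and sufficient.

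First, I will form the (ordinary) Lagrangian
\begin{align*}
L(x,z,\eta) = & \sum_{i \in \mathcal{R}}\bigl(\mathbb{E}[F(x_i,\xi_i)] + \lambda\|z(0,i)-z(i,0)\|_1\bigr)+f_0(x_0) \\
& + \sum_{i \in \mathcal{R}} \bigl( \langle \eta(i,0), z(i,0)-x_0\rangle + \langle \eta(0,i), z(0,i)-x_i\rangle\bigr),
\end{align*}
and write down stationarity in each block of primal variables. Differentiating in $x_i$ yields $\mathbb{E}[F'(x_i^*,\xi_i)] = \eta^*(0,i)$, and differentiating in $x_0$ gives $f_0'(x_0^*) = \sum_{i\in\mathcal{R}}\eta^*(i,0)$. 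For the nonsmooth blocks $z(0,i)$ and $z(i,0)$, I take subdifferentials of $\lambda\|z(0,i)-z(i,0)\|_1$: letting $g_i^* \in \mathrm{sgn}(z^*(0,i)-z^*(i,0))$ (so $g_i^* \in [-1,1]^d$), the chain rule gives a subgradient $+\lambda g_i^*$ in the $z(0,i)$ component and $-\lambda g_i^*$ in the $z(i,0)$ component. Setting the subgradient inclusions for the Lagrangian to zero yields $\lambda g_i^* + \eta^*(0,i)=0$ and $-\lambda g_i^* + \eta^*(i,0) = 0$, i.e. $\eta^*(0,i) = -\lambda g_i^*$ and $\eta^*(i,0) = \lambda g_i^*$. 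Primal feasibility of \eqref{eq5} immediately gives $z^*(i,0)=x_0^*$ and $z^*(0,i)=x_i^*$. Together these produce every line of the KKT system \eqref{eqkkt}.

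Sufficiency and necessity follow from standard convex duality: the objective in \eqref{eq5} is proper convex, the constraints are affine so Slater's condition (in its refined affine form) is trivially satisfied, and the problem is feasible with finite value; hence strong duality holds and any KKT point is a primal-dual optimum and vice versa. Strong convexity of $\mathbb{E}[F(\cdot,\xi_i)]$ and $f_0$ under Assumption \ref{ass1} further ensures uniqueness of the primal optimum $(x^*,z^*)$.

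Finally, for the \eqref{eq:opt} reformulation, I combine $\eta^*(0,i) = -\lambda g_i^*$ and $\eta^*(i,0) = \lambda g_i^*$ to conclude $\eta^*(i,0)+\eta^*(0,i)=0$, so the single-variable notation $\eta_i^* := \eta^*(i,0) = -\eta^*(0,i)$ is consistent. Rewriting the first two conditions with this notation gives $f_0'(x_0^*) = \sum_{i\in\mathcal{R}}\eta_i^*$ and $\mathbb{E}[F'(x_i^*,\xi_i)] = -\eta_i^*$, while $\eta_i^* = \lambda g_i^* \in [-\lambda,\lambda]^d$ follows from $g_i^* \in [-1,1]^d$. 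The only step requiring any care is the correct sign bookkeeping for the two subgradients of the coupled $\ell_1$ term, but once written out componentwise, no real obstacle remains. \qed
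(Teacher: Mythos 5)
Your proof is correct and follows essentially the same route as the paper's: write the Lagrangian of \eqref{eq5}, take stationarity in $x_i$, $x_0$ and subdifferential inclusions in $z(0,i)$, $z(i,0)$ together with primal feasibility, identify the subgradient of the coupled $\ell_1$ term with $\pm\lambda g_i^*$, and then collapse the notation via $\eta_i^*:=\eta^*(i,0)=-\eta^*(0,i)$. The only difference is that you make explicit the convexity/affine-constraint argument for why the KKT conditions are both necessary and sufficient, which the paper leaves implicit.
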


\begin{proof}
The KKT conditions of \eqref{eq5} are given by
\begin{align*}
\begin{cases}
\mathbb{E}[F'({x_i^*}, \xi_i)]-\eta^*(0,i)=0,\quad
f'_0(x_0^*)+\sum_i\big(-\eta^*(i,0)=0,\\
0 \in \lambda \partial_{z^*(0,i)}\|z^*(0,i)-z^*(i,0)\|_1 + \eta^*(0,i),\quad
0 \in \lambda \partial_{z^*(i,0)}\|z^*(0,i)-z^*(i,0)\|_1 + \eta^*(i,0),\\
z^*(i,0)-x_0^*=0,\quad
z^*(0,i)-x_i^*=0,
\end{cases}
\end{align*}
where $\partial_{z^*(0,i)}(\cdot)$ and $\partial_{z^*(i,0)}(\cdot)$ denote the sets of subgradients. Applying the definition of the element-wise sign function $sgn(\cdot)$ yields \eqref{eqkkt}. Further noticing $\eta_i^*:=\eta^*(i,0) = -\eta^*(i,0)$, we obtain \eqref{eq:opt}.
\end{proof}

\begin{lemma}[Theorem 2.1.12, \cite{Nesterov}] \label{lem:nesterov}
With Assumptions \ref{ass1} and \ref{ass2}, for any $x_i$ and $x_0$, it holds
\begin{align}
&\langle F'_i(x_i)-F'_i(x_i^*), x_i-x_i^* \rangle \ge\frac1{\mu_i+L_i}\|F'_i(x_i)-F'_i(x_i^*)\|^2 + \frac{\mu_iL_i}{\mu_i+L_i}\|x_i-x_i^*\|^2,\quad\label{eq:nesi} \\
&\langle f'_0(x_0)-f'_0(x_0^*), x_0-x_0^* \rangle \ge  \frac1{\mu_0+L_0}\|f'_0(x_0)-f'_0(x_0^*)\|^2 + \frac{\mu_0L_0}{\mu_0+L_0}\|x_0-x_0^*\|^2.  \label{eq:nes0}
\end{align}
\end{lemma}

\begin{lemma}\label{lem:eta}
With the simplified updates \eqref{eqxi-update}, \eqref{eqx0-update}, and \eqref{eq11}, it holds for any regular worker $i\in\mathcal{R}$ that
\begin{align}\label{eq:lem:eta}
\langle \eta_i^{k+1}-\eta_i^*, x_0^{k+1}-x_i^{k+1} \rangle\le&~-\frac2{\beta}\langle \eta_i^{k+1}-\eta_i^*,\eta_i^{k+1}-\eta_i^{k} \rangle=\frac1\beta \left( \|\eta_i^{k}-\eta_i^*\|^2 - \|\eta_i^{k+1}-\eta_i^*\|^2 - \|\eta_i^{k+1}-\eta_i^{k}\|^2 \right),\\
\langle \eta_i^{k+1}, x_0^{k+1}-x_i^{k+1} \rangle\le&~-\frac2{\beta}\langle \eta_i^{k+1},\eta_i^{k+1}-\eta_i^{k} \rangle=\frac1\beta \left( \|\eta_i^{k}\|^2 - \|\eta_i^{k+1}\|^2 - \|\eta_i^{k+1}-\eta_i^{k}\|^2 \right).\label{eq:lem:eta2}
\end{align}
\end{lemma}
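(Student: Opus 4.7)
The plan is to recognize that both inequalities in Lemma \ref{lem:eta} are direct consequences of the variational inequality characterization of the projection onto a convex set, combined with a standard three-point (polarization) identity. Specifically, the update \eqref{eq11} writes $\eta_i^{k+1}$ as the Euclidean projection of $w^{k+1}:=\eta_i^k+\tfrac{\beta}{2}(x_i^{k+1}-x_0^{k+1})$ onto the convex box $C:=[-\lambda,\lambda]^d$, so for every $y\in C$ we have $\langle y-\eta_i^{k+1},\,w^{k+1}-\eta_i^{k+1}\rangle\le 0$, which after substituting the definition of $w^{k+1}$ rearranges to
\begin{equation*}
\langle y-\eta_i^{k+1},\,x_i^{k+1}-x_0^{k+1}\rangle \;\le\; \tfrac{2}{\beta}\,\langle y-\eta_i^{k+1},\,\eta_i^{k+1}-\eta_i^k\rangle.
\end{equation*}

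To obtain \eqref{eq:lem:eta}, I would apply this inequality with the choice $y=\eta_i^*$, which is admissible because Lemma \ref{thm:opt} (specifically the characterization $\eta_i^*=\lambda g_i^*\in[-\lambda,\lambda]^d$ in \eqref{eq:opt}) guarantees $\eta_i^*\in C$. Flipping the sign of $x_i^{k+1}-x_0^{k+1}$ and $y-\eta_i^{k+1}$ on both sides yields exactly the first part of \eqref{eq:lem:eta}. To get \eqref{eq:lem:eta2}, the same inequality is invoked with $y=0\in C$, again followed by the sign flip. So each inequality reduces to one application of the projection inequality plus the observation that the relevant reference point lies in $C$.

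For the equalities on the right-hand sides, I would simply invoke the three-point identity $2\langle a-b,\,a-c\rangle = \|a-b\|^2 + \|a-c\|^2 - \|b-c\|^2$ (which follows by expanding $\|b-c\|^2=\|(a-c)-(a-b)\|^2$). Applying it with $(a,b,c)=(\eta_i^{k+1},\eta_i^*,\eta_i^k)$ delivers the equality in \eqref{eq:lem:eta}, and with $(a,b,c)=(\eta_i^{k+1},0,\eta_i^k)$ it delivers the equality in \eqref{eq:lem:eta2}.

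There is no serious obstacle here; the entire proof is essentially a one-line invocation of a well-known projection/polarization calculation. The only place where mild care is required is bookkeeping the signs when passing from $x_i^{k+1}-x_0^{k+1}$ to $x_0^{k+1}-x_i^{k+1}$ and from $\eta_i^{k+1}-\eta_i^k$ to $\eta_i^k-\eta_i^{k+1}$, and verifying through Lemma \ref{thm:opt} that $\eta_i^*$ indeed lies in the projection set $[-\lambda,\lambda]^d$ so that the variational inequality can be legitimately applied at that point.
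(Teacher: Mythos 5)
Your proof is correct, but it takes a genuinely different route from the paper's. The paper never invokes the projection variational inequality directly: it returns to the auxiliary variables of the ADMM derivation, uses the relations from the proof of Lemma \ref{lem:cal} to write $z^{k+1}(i,0)=x_0^{k+1}+\frac1\beta(\eta_i^{k+1}-\eta_i^k)$ and $z^{k+1}(0,i)=x_i^{k+1}-\frac1\beta(\eta_i^{k+1}-\eta_i^k)$, identifies $\eta_i^{k+1}=\lambda g_i^{k+1}$ with a subgradient of the TV term at the new $z$-iterate via the first-order optimality condition of the $z$-subproblem, and then derives the inequality by summing two subgradient inequalities for $\lambda\|\cdot\|_1$ at $z^{k+1}$ and $z^*$ (i.e., by monotonicity of the subdifferential). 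You instead read \eqref{eq11} as the Euclidean projection onto the box $[-\lambda,\lambda]^d$ (which the coordinate-wise clipping indeed is) and apply the obtuse-angle property $\langle y-\mathrm{proj}_C(w),\,w-\mathrm{proj}_C(w)\rangle\le0$ at $y=\eta_i^*$ and $y=0$; the hypotheses you need, namely $\eta_i^*\in[-\lambda,\lambda]^d$ from \eqref{eq:opt} and $0\in[-\lambda,\lambda]^d$, are both available, and your sign bookkeeping and the polarization identity are applied correctly. The two arguments are essentially dual to one another, but yours is shorter, avoids re-deriving the $z$-iterates, and for \eqref{eq:lem:eta2} is arguably cleaner: the paper obtains that bound by substituting the subgradient $[0;0]$ at $\bigl[z^*(0,i);z^*(i,0)\bigr]$, which implicitly relies on the consensus $z^*(0,i)=z^*(i,0)$, whereas your choice $y=0$ needs no property of the optimal point beyond membership in the box.
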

\begin{proof}
We first show the inequality in \eqref{eq:lem:eta}, and then modify it to prove \eqref{eq:lem:eta2}. The relationships derived during the proof of Lemma \ref{lem:cal} are useful here. Since $\eta^k_i:=\eta^k(i,0)=-\eta^k(0,i)$, the right-hand side of \eqref{eq10b} is exactly $\frac1\beta\eta_i^{k+1}$. Combining this fact with \eqref{eq10a} yields
\begin{equation}
z^{k+1}(i,0)=x_0^{k+1}+\frac1\beta\big( \eta_i^{k+1}-\eta_i^{k} \big),\quad
z^{k+1}(0,i)=x_i^{k+1}-\frac1\beta\big( \eta_i^{k+1}-\eta_i^{k} \big).\label{eq:z^k+1}
\end{equation}
Recall that in \eqref{eq9}, we minimize the function
$$\lambda\| z(0,i)-z(i,0)\|_1 +\frac{\beta}{2}\big\| z(i,0)-\big(x_0^{k+1} - \frac1\beta\eta^k_i \big) \big\|^2 + \frac{\beta}{2}\big\| z(0,i)- \big(x_i^{k+1}+\frac1\beta\eta^k_i\big)\big\|^2,$$
with respect to $\big[z(i,0); z(0,i)\big]\in\mathbb{R}^{2d}$. From the first order optimality condition, there exists a subgradient of $\lambda\|z(0,i)-z(i,0)\|_1$ at $\big[z^{k+1}(0,i);z^{k+1}(i,0)\big]$, denoted as $[\lambda g_i^{k+1};-\lambda g_i^{k+1}]\in[-\lambda,\lambda]^{2d}$, such that
\begin{equation}\label{eq:g^k+1}
0=\lambda g_i^{k+1} + \beta\big( z^{k+1}(0,i)- (x_i^{k+1} + \frac1\beta\eta^k_i )\big)=\lambda g_i^{k+1} - \eta^{k+1}.
\end{equation}
Applying the definition of subgradient of $\lambda\|z(0,i)-z(i,0)\|_1$ at points $\big[z^{k+1}(0,i);z^{k+1}(i,0)\big]$ and $\big[z^*(0,i);z^*(i,0)\big]$ gives
\begin{align}
\lambda\|z^*(0,i)-z^*(i,0)\|_1\ge &~\lambda\|z^{k+1}(0,i)-z^{k+1}(i,0)\|_1 \notag\\
&~+ \big\langle [(\lambda g_i^{k+1};-\lambda g_i^{k+1}]), \big[z^*(0,i)-z^{k+1}(0,i); z^*(i,0) - z^{k+1}(i,0)\big] \big\rangle,\label{eq:subgrad1}\\
\lambda\|z^{k+1}(0,i)-z^{k+1}(i,0)\|_1\ge&~ \lambda\|z^*(0,i)-z^*(i,0)\|_1 \notag\\
&~+ \big\langle [\lambda g_i^*;-\lambda g_i^*], \big[z^{k+1}(0,i)-z^*(0,i); z^{k+1}(i,0) - z^*(i,0)\big] \big\rangle,\label{eq:subgrad2}
\end{align}
where $g_i^*=\frac{\eta_i^*}\lambda $ is defined in Lemma \ref{thm:opt}. Summing up \eqref{eq:subgrad1} and \eqref{eq:subgrad2}, we have
\begin{equation*}
0\ge \big\langle \lambda g_i^{k+1}-\lambda g_i^*, z^{k+1}(i,0)-z^{k+1}(0,i) \big\rangle
=\big\langle \eta^{k+1}-\eta^*,x_0^{k+1}-x_i^{k+1}+\frac2\beta\big( \eta_i^{k+1}-\eta_i^{k} \big) \big\rangle,
\end{equation*}
where the last equality comes from \eqref{eq:z^k+1} and \eqref{eq:g^k+1}. Rearranging the terms gives \eqref{eq:lem:eta}.

Further note that $[\lambda g_i^*;-\lambda g_i^*]$ in \eqref{eq:subgrad2} can be replaced by any subgradient of $\lambda\|z(0,i)-z(i,0)\|_1$ at $\big[z^*(0,i);z^*(i,0)\big]$. We hence replace it by $[0;0]$, and then obtain \eqref{eq:lem:eta2}.
\end{proof}

\section{Proof of Theorem \ref{thm:conv}}\label{app:conv}

\noindent{\textbf{Restatement of Theorem \ref{thm:conv}}.}
Suppose Assumptions \ref{ass1}, \ref{ass2}, and \ref{ass:var} hold. let $\lambda\ge\lambda_0$ and the stepsizes be
$$\alpha_0^k=\min\left\{\frac1{ck+m\beta},A\right\}, \quad \alpha_i^k=\min\left\{\frac1{ck+\beta}, A\right\},\ \forall i\in\mathcal{R},$$ for some positive constants $c<\min\left\{\frac{\mu_0L_0}{\mu_0+L_0}, \frac{\mu_iL_i}{\mu_i+L_i}: i\in\mathcal{R} \right\}$, $\beta$, and ${A}\le\min\left\{\frac1{\mu_0+L_0}, \frac1{\mu_i+L_i}: i\in\mathcal{R} \right\}$. Define constants
\begin{align*}
c_0&=\min\bigg\{c + (m-1)\beta, \frac{\mu_0L_0}{\mu_0+L_0}, \frac{\mu_iL_i}{\mu_i+L_i}: i \in \mathcal{R} \bigg\},\\
c_1&=\frac{9d(\mu_0+L_0)}{\mu_0L_0}\lambda^2q^2,\\
c_2&= \left[2(4r+3q)^2+64r+16(m-1)\beta\left(\frac{r}{2\beta}+\sum_i\bigg(\frac1{\mu_i}+\frac1{L_i}\bigg)
\right)\right]\lambda^2d+4\sum_i\delta_i^2.
\end{align*}
Also denote $V^k=\mathbb{E} \|x_0^k-x_0^*\|^2+\sum_{i \in \mathcal{R}} \big(\mathbb{E} \|x_i^k-x_i^*\|^2+\frac{2\alpha_i^{k-1}}\beta\|\eta_i^{k-1}-\eta_i^*\|^2\big)$. Then we have
\begin{align}
V^{k+1} \le \begin{cases} V^k(1-c_0\alpha^k_0) + c_1\alpha^k_0 + c_2(\alpha^k_i)^2, \quad & k>k_0,\\
 V^k(1-c_0A) + {A} c_1 + {A}^2 c_2, \quad & k\le k_0,\end{cases}\label{eq:iter}
\end{align}
where $k_0=\min\{k:\frac1{ck+\beta}<A\}$.
Consequently, it holds
\begin{equation}
V^k\le\begin{cases}
V^0 (1-cA)^{k+1}+ \frac{c_1+{A}c_2}{c_0}, \quad &k \le k_0,\\
\frac{C}{c(k-1)+m\beta}+\frac{c_1}{c_0}, \quad &k >k_0,
\end{cases}
\label{eq:Vk-bound-app}
\end{equation}
with
\begin{equation*}
C=\max\left\{\left(\frac{ck_0+m\beta}{ck_0+\beta}\right)^2\frac{c_2}{c_0-c}, (ck_0+m\beta)\left(V^0(1-c_0A)^{k_0+1}+\frac{Ac_2}{c_0}\right)\right\}.
\end{equation*}
\begin{proof}
Recall that the updates satisfy
\begin{align}
x_0^{k+1}= &~x_0^k - \alpha_0^k\bigg( f'_0(x_0^k) - \sum_{i \in \mathcal{R}} \big(2\eta^k_i-\eta^{k-1}_i\big) - \sum_{j \in \mathcal{B}} \big(2\eta^k_j-\eta^{k-1}_j \big)\bigg),\label{x0-adm-sim}\\
x_i^{k+1}=&~x_i^k - \alpha_i^k\bigg( F'(x_i^k,\xi_i^k) + \big(2\eta^k_i-\eta^{k-1}_i\big) \bigg), \forall i \in \mathcal{R}, \label{xi-adm-sim}\\
\eta_i^{k+1}=&~\mathrm{proj}_{\lambda}\left(\eta^k_i+\frac{\beta}{2}(x^{k+1}_i-x^{k+1}_0) \right)\in[-\lambda,\lambda]^d, \forall i \in \mathcal{R} \quad \text{and} \quad \eta_j^{k+1}\in[-\lambda,\lambda]^d, \forall j \in \mathcal{B}.\label{eta-range}
\end{align}

{\bf Step 1.} At the master side, we have
\begin{align}
\mathbb{E}  \|x_0^{k+1}-x_0^*\|^2\overset{\eqref{x0-adm-sim}}=&~ \mathbb{E} \|x_0^k-x_0^*\|^2\notag
 + (\alpha_0^k)^2 \mathbb{E}\bigg\| f'_0(x_0^k) - \sum_{i \in \mathcal{R}} \big(2\eta^k_i-\eta^{k-1}_i\big) - \sum_{j \in \mathcal{B}} \big(2\eta^k_j-\eta^{k-1}_j \big) \bigg\|^2\\
&~-2 \alpha_0^k \mathbb{E}\bigg\langle f'_0(x_0^k) - \sum_{i \in \mathcal{R}} \big(2\eta^k_i-\eta^{k-1}_i\big) - \sum_{j \in \mathcal{B}} \big(2\eta^k_j-\eta^{k-1}_j\big), x_0^k-x_0^* \bigg\rangle. \label{eq:expand0}
\end{align}
For the second term in \eqref{eq:expand0}, the inequality $\|a+b\|^2\le 2\|a\|^2+2\|b\|^2$ gives
\begin{align}
&~\mathbb{E}\bigg\| f'_0(x_0^k) - \sum_{i \in \mathcal{R}} \big(2\eta^k_i-\eta^{k-1}_i\big) - \sum_{j \in \mathcal{B}} \big(2\eta^k_j-\eta^{k-1}_j \big) \bigg\|^2\notag\\
\overset{\eqref{eq:opt}}=&~\mathbb{E}\bigg\| f'_0(x_0^k) - f'_0(x_0^*) - \sum_{i \in \mathcal{R}} \big(2\eta^k_i-\eta^{k-1}_i-\eta^*_i\big) - \sum_{j \in \mathcal{B}} \big(2\eta^k_j-\eta^{k-1}_j \big) \bigg\|^2\notag\\
\le&~2\mathbb{E}\big\| f'_0(x_0^k) - f'_0(x_0^*)\big\|^2 + 2\mathbb{E}\bigg\| \sum_{i \in \mathcal{R}} \big(2\eta^k_i-\eta^{k-1}_i-\eta^*_i\big) + \sum_{j \in \mathcal{B}} \big(2\eta^k_j-\eta^{k-1}_j \big) \bigg\|^2\notag\\
\overset{\eqref{eta-range}}\le&~2\mathbb{E}\big\| f'_0(x_0^k) - f'_0(x_0^*)\big\|^2 + 2(4r+3q)^2\lambda^2d.\label{eq:x0-2}
\end{align}
Applying the inequality $2\langle a,b \rangle\le \epsilon \|a\|^2 + \frac1\epsilon \|b\|^2$ to the third term in \eqref{eq:expand0} with $\epsilon=\frac{\mu_0L_0}{\mu_0+L_0}$ yields \begin{align}
&-2\mathbb{E}\bigg\langle f'_0(x_0^k) - \sum_{i \in \mathcal{R}} \big(2\eta^k_i-\eta^{k-1}_i\big) - \sum_{j \in \mathcal{B}} \big(2\eta^k_j-\eta^{k-1}_j\big), x_0^k-x_0^* \bigg\rangle\notag\\
\overset{\eqref{eq:opt}}=&-2\mathbb{E}\big\langle f'_0(x_0^k) - f'_0(x_0^*), x_0^k-x_0^* \big\rangle + 2 \sum_{i \in \mathcal{R}}\mathbb{E}\big\langle 2\eta^k_i-\eta^{k-1}_i-\eta_i^*, x_0^k-x_0^* \big\rangle +
2\mathbb{E}\bigg\langle \sum_{j \in \mathcal{B}} 2\eta^k_j-\eta^{k-1}_j, x_0^k-x_0^* \bigg\rangle
\notag\\
\overset{\eqref{eq:nes0}}\le&-\frac2{\mu_0+L_0}\mathbb{E}\|f'_0(x_0^k)-f'_0(x_0^*)\|^2 - \frac{2\mu_0L_0}{\mu_0+L_0}\mathbb{E}\|x_0^k-x_0^*\|^2 +2 \sum_{i \in \mathcal{R}}\mathbb{E}\big\langle 2\eta^k_i-\eta^{k-1}_i-\eta_i^*, x_0^k-x_0^* \big\rangle \notag\\
&+ \frac{\mu_0L_0}{\mu_0+L_0} \mathbb{E}\|x_0^k-x_0^*\|^2 + \frac{\mu_0+L_0}{\mu_0L_0} \mathbb{E}\bigg\|\sum_{j \in \mathcal{B}} \big(2\eta^k_j-\eta^{k-1}_j\big)\bigg\|^2\notag\\
\le&-\frac2{\mu_0+L_0}\mathbb{E}\|f'_0(x_0^k)-f'_0(x_0^*)\|^2 - \frac{\mu_0L_0}{\mu_0+L_0} \mathbb{E}\|x_0^k-x_0^*\|^2 + \frac{\mu_0+L_0}{\mu_0L_0}(3\lambda q)^2d \notag\\
&+ 2 \sum_{i \in \mathcal{R}}\mathbb{E}\big\langle 2\eta^k_i-\eta^{k-1}_i-\eta_i^*, x_0^k-x_0^* \big\rangle.\label{eq:x0-3}
\end{align}
Substituting \eqref{eq:x0-2} and \eqref{eq:x0-3} into \eqref{eq:expand0} gives
\begin{align}
\mathbb{E} \|x_0^{k+1}-x_0^*\|^2 \le&~ \mathbb{E} \|x_0^k-x_0^*\|^2\left( 1-\alpha_0^k\frac{\mu_0L_0}{\mu_0+L_0} \right) - \mathbb{E}\big\| f'_0(x_0^k) - f'_0(x_0^*)\big\|^2 2\alpha_0^k \left( \frac1{\mu_0+L_0}-\alpha_0^k \right)\notag \\
&~+ 2\lambda^2d(4r+3q)^2(\alpha_0^k)^2 + \frac{9\lambda^2d(\mu_0+L_0)}{\mu_0L_0}q^2\alpha_0^k  + 2\alpha_0^k \sum_{i \in \mathcal{R}}\mathbb{E}\big\langle 2\eta^k_i-\eta^{k-1}_i-\eta_i^*, x_0^k-x_0^* \big\rangle\notag\\
\le&~ \mathbb{E} \|x_0^k-x_0^*\|^2\left( 1-\alpha_0^k \frac{\mu_0L_0}{\mu_0+L_0} \right) + 2\lambda^2d(4r+3q)^2(\alpha^k_0)^2 + \frac{9\lambda^2d(\mu_0+L_0)}{\mu_0L_0}q^2\alpha^k_0 \notag\\
&+ 2\alpha_0^k \sum_{i \in \mathcal{R}} \mathbb{E}\big\langle 2\eta^k_i-\eta^{k-1}_i-\eta_i^*, x_0^k-x_0^* \big\rangle, \label{eq:step1}
\end{align}
where the last inequality comes from that $\alpha_0^k\le{A}\le\frac1{\mu_0+L_0}$.

{\bf Step 2.} Accordingly, at the regular worker side, we have for any $i \in \mathcal{R}$ that
\begin{align}
\mathbb{E} \|x_i^{k+1}-x_i^*\|^2 \overset{\eqref{xi-adm-sim}}=&~ \mathbb{E} \|x_i^k-x_i^*\|^2 + (\alpha_i^k)^2\mathbb{E}\bigg\| F'(x_i^k,\xi_i^k) + \big(2\eta^k_i-\eta^{k-1}_i\big) \bigg\|^2\label{eq:expandi}\\
&~-2\alpha_i^k \mathbb{E}\bigg\langle F'(x_i^k,\xi_i^k) + \big(2\eta^k_i-\eta^{k-1}_i\big), x_i^k-x_i^* \bigg\rangle. \notag
\end{align}
For the second term in \eqref{eq:expandi}, the inequality $\|a+b\|^2\le 2\|a\|^2+2\|b\|^2$ gives that
\begin{align}
 \mathbb{E}\bigg\| F'(x_i^k,\xi_i^k) + \big(2\eta^k_i-\eta^{k-1}_i\big) \bigg\|^2 \overset{\eqref{eq:opt}}= &~ \mathbb{E}\bigg\|F'(x_i^k,\xi_i^k) - F'_i(x_i^k) + F'_i(x_i^k) -F'_i(x_i^*) + \big(2\eta^k_i-\eta^{k-1}_i-\eta_i^*\big) \bigg\|^2\notag\\
\le&~2\mathbb{E}\big\| F'_i(x_i^k) -F'_i(x_i^*)\big\|^2 + 4\mathbb{E}\|F'(x_i^k,\xi_i^k) - F'_i(x_i^k)\|^2 + 4\mathbb{E}\|2\eta^k_i-\eta^{k-1}_i-\eta^*_i\|^2\notag\\
\le&~2\mathbb{E}\big\| F'_i(x_i^k) -F'_i(x_i^*)\big\|^2 + 4\delta_i^2 + 64\lambda^2d,\label{eq:xi-2}
\end{align}
where the last inequality comes from \eqref{eq:ass-var} and \eqref{eta-range}. Then the third term in \eqref{eq:expandi} can be upper-bounded as
\begin{align}
&-2\mathbb{E}\bigg\langle F'(x_i^k,\xi_i^k) + \big(2\eta^k_i-\eta^{k-1}_i\big), x_i^k-x_i^* \bigg\rangle\notag\\
=&-2\mathbb{E}\bigg\langle F'_i(x_i^k) + \big(2\eta^k_i-\eta^{k-1}_i\big), x_i^k-x_i^* \bigg\rangle\notag\\
\overset{\eqref{eq:opt}}=&-2\mathbb{E}\big\langle F'_i(x_i^k) - F'_i(x_i^*), x_i^k-x_i^* \big\rangle - 2\mathbb{E}\langle 2\eta^k_i-\eta^{k-1}_i-\eta_i^*, x_i^k-x_i^* \rangle\notag\\
\overset{\eqref{eq:nesi}}\le&-\frac2{\mu_i+L_i}\mathbb{E}\|F'_i(x_i^k) - F'_i(x_i^*)\|^2 - \frac{2\mu_iL_i}{\mu_i+L_i}\mathbb{E}\|x_i^k-x_i^*\|^2 - 2\mathbb{E}\langle 2\eta^k_i-\eta^{k-1}_i-\eta_i^*, x_i^k-x_i^* \rangle,\label{eq:xi-3}
\end{align}
where the first equality comes from taking expectation of the conditional expectation; that is, $\mathbb{E} x = \mathbb{E}\big[\mathbb{E}[x|\mathcal{F}_{k-1}]\big]$ with $\mathcal{F}_{k-1}$ denoting the sigma-field generated by $\{\xi_i^{l-1},\eta_j^l: l\le k, i\in\mathcal{R}, j\in\mathcal{B}\}$.

Substituting \eqref{eq:xi-2} and \eqref{eq:xi-3} into \eqref{eq:expandi} gives
\begin{align}
\mathbb{E} \|x_i^{k+1}-x_i^*\|^2
\le&~ \mathbb{E} \|x_i^k-x_i^*\|^2\left( 1 - \alpha_i^k\frac{\mu_iL_i}{\mu_i+L_i} \right) - \mathbb{E}\big\| F'_i(x_i^k) - F'_i(x_i^*)\big\|^2 2\alpha_i^k\left( \frac1{\mu_i+L_i}-\alpha_i^k \right) \notag\\
&~+(4\delta_i^2 + 64\lambda^2d)(\alpha_i^k)^2 - 2\alpha_i^k\mathbb{E}\langle 2\eta^k_i-\eta^{k-1}_i-\eta_i^*, x_i^k-x_i^* \rangle\notag\\
\le&~ \mathbb{E} \|x_i^k-x_i^*\|^2\left( 1-\alpha_i^k \frac{\mu_iL_i}{\mu_i+L_i} \right)+(4\delta_i^2 + 64\lambda^2d)(\alpha_i^k)^2 -2\alpha_i^k\mathbb{E}\langle 2\eta^k_i-\eta^{k-1}_i-\eta_i^*, x_i^k-x_i^* \rangle, \label{eq:step2}
\end{align}
where the last inequality comes from that $\alpha_i^k\le{A}\le \frac1{\mu_i+L_i}$.

{\bf Step 3.} Now combine \eqref{eq:step1} with \eqref{eq:step2}. Using the notation $V^k=\mathbb{E} \|x_0^k-x_0^*\|^2+\sum_i \mathbb{E} \big(\|x_i^k-x_i^*\|^2+\frac{2\alpha_i^{k-1}}\beta\|\eta_i^{k-1}-\eta_i^*\|^2\big)$, we have
\begin{align}
V^{k+1}\le &~\mathbb{E} \|x_0^k-x_0^*\|^2\left( 1-\alpha_0^k \frac{\mu_0L_0}{\mu_0+L_0} \right) +\sum_{i\in\mathcal{R}} \mathbb{E} \|x_i^k-x_i^*\|^2\left( 1-\alpha_i^k \frac{\mu_iL_i}{\mu_i+L_i} \right)\label{eq:iter-general}\\
&~+ \lambda^2d\bigg[2(4r+3q)^2(\alpha_0^k)^2 + 64\sum_{i\in\mathcal{R}}(\alpha_i^k)^2\bigg] + 4\sum_{i\in\mathcal{R}}\delta_i^2(\alpha_i^k)^2 + \frac{9\lambda^2d(\mu_0+L_0)}{\mu_0L_0}q^2\alpha^k_0 \notag\\
&~+\sum_{i\in\mathcal{R}} \frac{2\alpha_i^k}\beta\mathbb{E}\|\eta_i^k-\eta_i^*\|^2 - 2\sum_{i\in\mathcal{R}}\mathbb{E}\big\langle 2\eta^k_i-\eta^{k-1}_i-\eta_i^*, \alpha_i^k(x_i^k-x_i^*)-\alpha_0^k(x_0^k-x_0^*) \big\rangle \notag.
\end{align}
For the last term in \eqref{eq:iter-general}, notice that
\begin{align}
&~-\mathbb{E}\big\langle 2\eta^k_i-\eta^{k-1}_i-\eta_i^*, \alpha_i^k(x_i^k-x_i^*)-\alpha_0^k(x_0^k-x_0^*) \big\rangle\notag\\
=&~-\alpha_i^k\mathbb{E}\big\langle 2\eta^k_i-\eta^{k-1}_i-\eta_i^*, x_i^k-x_0^k \big\rangle - (\alpha_i^k-\alpha_0^k)\mathbb{E}\big\langle 2\eta^k_i-\eta^{k-1}_i-\eta_i^*, x_0^k-x_0^* \big\rangle\notag\\
=&~-\alpha_0^k\mathbb{E}\big\langle \eta^k_i-\eta_i^*, x_i^k-x_0^k \big\rangle - \alpha_0^k\mathbb{E}\big\langle \eta^k_i-\eta_i^{k-1}, x_i^k-x_0^k \big\rangle - (\alpha_i^k-\alpha_0^k)\mathbb{E}\big\langle 2\eta^k_i-\eta^{k-1}_i-\eta_i^*, x_i^k-x_i^* \big\rangle\label{eq:step3-2},
\end{align}
where the first equality comes from Corollary \ref{Coro:1} that $x_i^*=x_0^*$.
For the first term in \eqref{eq:step3-2}, Lemma \ref{lem:eta} suggests that
\begin{equation}\label{eq:step3-p1}
-\alpha_0^k\mathbb{E}\big\langle \eta^k_i-\eta_i^*, x_i^k-x_0^k \big\rangle\le \frac{\alpha_0^k}\beta \left( \|\eta_i^{k-1}-\eta_i^*\|^2 - \|\eta_i^{k}-\eta_i^*\|^2 - \|\eta_i^{k}-\eta_i^{k-1}\|^2 \right) \le \frac{\alpha_0^k}\beta \left( \|\eta_i^{k-1}-\eta_i^*\|^2 - \|\eta_i^{k}-\eta_i^*\|^2\right).
\end{equation}
For the second therm in \eqref{eq:step3-2}, the projection operator in the $\eta_i$-update gives that
\begin{equation}\label{eq:step3-p2}
\big\langle \eta^k_i-\eta_i^{k-1}, x_i^k-x_0^k \big\rangle=\big\langle \mathrm{proj}_{\lambda}\big(\eta^{k-1}_i+\frac{\beta}{2}(x^{k}_i-x^{k}_0) \big)-\eta_i^{k-1}, x_i^k-x_0^k \big\rangle\ge0,
\end{equation}
provided $\eta_i^{k-1}\in[-\lambda,\lambda]^d$. For the third term in \eqref{eq:step3-2}, we apply the equality $2\langle a,b \rangle\le \frac1{\epsilon_i} \|a\|^2 + {\epsilon_i} \|b\|^2$ with ${\epsilon_i}= \frac{\mu_iL_i}{\mu_i+L_i}$ to obtain
\begin{align}
- (\alpha_i^k-\alpha_0^k)\mathbb{E}\big\langle 2\eta^k_i-\eta^{k-1}_i-\eta_i^*, x_i^k-x_i^* \big\rangle
\le&~ \frac{\alpha_i^k-\alpha_0^k}2\left(\frac{\mu_i+L_i}{\mu_iL_i}(4\lambda)^2d + \frac{\mu_iL_i}{\mu_i+L_i}\mathbb{E}\|x_i^k-x_i^*\|^2\right).\label{eq:step3-p3}
\end{align}

Therefore, applying the bounds in \eqref{eq:step3-p1}, \eqref{eq:step3-p2}, and \eqref{eq:step3-p3} to \eqref{eq:step3-2}, we get
\begin{align}
&~-2\mathbb{E}\big\langle 2\eta^k_i-\eta^{k-1}_i-\eta_i^*, \alpha_i^k(x_i^k-x_i^*)-\alpha_0^k(x_0^k-x_0^*) \big\rangle\notag\\
\le&~\frac{2\alpha_0^k}\beta \left( \mathbb{E}\|\eta_i^{k-1}-\eta_i^*\|^2 - \mathbb{E}\|\eta_i^{k}-\eta_i^*\|^2\right) + (\alpha_i^k-\alpha_0^k)\left(\frac{\mu_i+L_i}{\mu_iL_i}16\lambda^2d + \frac{\mu_iL_i}{\mu_i+L_i}\mathbb{E}\|x_i^k-x_i^*\|^2\right)\notag\\
\le&~\frac{2\alpha_0^k}\beta\mathbb{E} \|\eta_i^{k-1}-\eta_i^*\|^2 - \frac{2\alpha_i^k}\beta\mathbb{E}\|\eta_i^{k}-\eta_i^*\|^2 + (\alpha_i^k-\alpha_0^k)\left( (\frac1{2\beta} + \frac{\mu_i+L_i}{\mu_iL_i})16\lambda^2d + \frac{\mu_iL_i}{\mu_i+L_i}\mathbb{E}\|x_i^k-x_i^*\|^2\right).\label{eq:step3-3}
\end{align}
Consequently \eqref{eq:iter-general} becomes
\begin{align}
V^{k+1}\le &~\mathbb{E} \|x_0^k-x_0^*\|^2\left( 1-\alpha^k_0\frac{\mu_0L_0}{\mu_0+L_0} \right) +\sum_{i\in\mathcal{R}} \mathbb{E} \|x_i^k-x_i^*\|^2\left( 1-\frac{\alpha_i^k+\alpha_0^k}2\frac{\mu_iL_i}{\mu_i+L_i}\right) + \sum_{i\in\mathcal{R}} \frac{\alpha_0^k}{\alpha_i^{k-1}}\frac{2\alpha_i^{k-1}}\beta\mathbb{E}\|\eta_i^{k-1}-\eta_i^*\|^2\notag\\
&~+ \lambda^2d\bigg[2(4r+3q)^2(\alpha_0^k)^2 + 64 \sum_{i\in\mathcal{R}} (\alpha_i^k)^2 + 16(\sum_{i\in\mathcal{R}}\frac{\mu_i+L_i}{\mu_i L_i} + \frac{r}{2\beta})(\alpha_i^k-\alpha_0^k)\bigg] + 4 \sum_{i\in\mathcal{R}} \delta_i^2(\alpha_i^k)^2 + \frac{9\lambda^2d(\mu_0+L_0)}{\mu_0L_0}q^2\alpha^k_0  \notag\\
\le&~ V^k(1-c_0\alpha^k_0) + c_1\alpha^k_0 + c_2(\alpha^k_i)^2,
\end{align}
where the last inequality comes from the upper-bound of $c_0$. Plugging in the choices of stepsizes, we obtain \eqref{eq:iter}.

{\bf Step 4.} Now we iteratively uses \eqref{eq:iter} to derive the $O(1/k)$-convergence of $V^k$. First, when $k\le k_0$, it holds
\begin{align*}
V^{k+1}\le& V^0 (1-c_0A)^{k+1} + ({A} c_1+{A}^2c_2) \big(1+(1-c_0A)+\ldots+(1-c_0A)^k\big)\\
\le &V^0 (1-c_0A)^{k+1}+ \frac{c_1+{A}c_2}{c_0}.
\end{align*}
For any $k\ge k_0 +1$, initially it holds $V^{k_0+1}\le\frac{C}{ck_0+m\beta}+\frac{c_1}{c_0}$ from the definition of $C$.
By deduction, if \eqref{eq:Vk-bound-app} holds for $k$, then
\begin{align*}
V^{k+1}\overset{\eqref{eq:iter}}\le&~\bigg(\frac{C}{c(k-1)+m\beta}+\frac{c_1}{c_0}\bigg)\bigg(1-\frac{c_0}{ck+m\beta}\bigg) + \frac{c_1}{ck+m\beta} + \frac{c_2}{(ck+\beta)^2}\\
=&~\frac{C}{ck+m\beta} - \frac{c_0-c}{c(k-1)+m\beta}\frac{C}{ck+m\beta}+ \frac{c_2}{(ck+\beta)^2} +\frac{c_1}{c_0}\\
\le&~\frac{C}{ck+m\beta} +\frac{c_1}{c_0} + \frac1{(ck+\beta)^2} \bigg({c_2} - \frac{(ck+\beta)^2(c_0-c)}{(ck+m\beta)^2}C\bigg)\\
\le&~\frac{C}{ck+m\beta} +\frac{c_1}{c_0},
\end{align*}
where the last inequality is from $C\ge(\frac{ck_0+m\beta}{ck_0+\beta})^2\frac{c_2}{c_0-c}$. This completes the proof.
\end{proof}

\section{$O(1/\sqrt{k})$-ergodic convergence}\label{app:sqk}
\begin{theorem}
Suppose Assumptions \ref{ass1}, \ref{ass2}, and \ref{ass:var} hold. Let $\lambda\ge\lambda_0$ and the stepsizes be
$$\alpha_0^k=\min\left\{\frac1{\bar{c}\sqrt{k}+m\bar\beta},\bar{A}\right\}, \quad \alpha_i^k=\min\left\{\frac1{\bar{c}\sqrt{k}+\bar\beta}, \bar{A}\right\},\ \forall i\in\mathcal{R}, $$ for some positive constants $\bar{c}$, $\bar\beta$, and $\bar{A}\le\min\left\{\frac{\mu_0}{4L_0^2}, \frac{\mu_i}{2L_i^2+ (m-1)\beta c}: i\in\mathcal{R} \right\}$.
Then the proposed algorithm converges in the ergodic sense that
\begin{equation}
\sum_{i\in\mathcal{R}} \mathbb{E}[F(\bar{x}_i^k,\xi_i)]+f_0(\bar{x}_0^k) - \underbrace{\min_{\tilde{x}}\left( \sum_{i\in\mathcal{R}} \mathbb{E}[F({\tilde{x}},\xi_i)]+f_0({\tilde{x}})\right)}_{\text{Our goal in } \eqref{eq2}} \le m\tilde{c}_1 + {O}\left(\frac{\log k}{\sqrt{k}}\right),
\end{equation}
where $\bar{x}_i^k=\sum_{l=1}^k\frac{\alpha^lx_i^l}{\sum_{l'=1}^k\alpha^{l'}}, \bar{x}_0^k=\sum_{l=1}^k\frac{\alpha^lx_0^l}{\sum_{l'=1}^k\alpha^{l'}}$ are the weighted average variables, and the constant $\bar{c}_1=\frac{18d}{\mu_0}\lambda^2q^2$.
\end{theorem}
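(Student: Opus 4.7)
My plan is to follow a standard Lyapunov/potential-function analysis for stochastic primal--dual methods, adapted to cope with the non-vanishing Byzantine contamination. Abbreviating $F_i(x)=\mathbb{E}[F(x,\xi_i)]$, Assumption \ref{ass1} gives strong convexity of each $F_i$ and of $f_0$, so that for every iterate
\begin{equation*}
F_i(x_i^l)-F_i(\tilde{x}^*)+\tfrac{\mu_i}{2}\|x_i^l-\tilde{x}^*\|^2\le \langle \nabla F_i(x_i^l),\, x_i^l-\tilde{x}^*\rangle,
\end{equation*}
and similarly for $f_0$. Since $\mathbb{E}[F'(x_i^l,\xi_i^l)\mid\mathcal{F}_{l-1}]=\nabla F_i(x_i^l)$, the task reduces to bounding weighted sums of $\mathbb{E}\langle F'(x_i^l,\xi_i^l),\, x_i^l-\tilde{x}^*\rangle$ and $\langle f_0'(x_0^l),\, x_0^l-\tilde{x}^*\rangle$. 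A concluding Jensen step applied to $\bar{x}_i^k=\sum_l\alpha^l x_i^l/\sum_{l'}\alpha^{l'}$ and $\bar{x}_0^k$ will turn any such bound on the weighted sum into a bound on the ergodic optimality gap.

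\textbf{Telescoping via the update rules.} I would then rewrite each gradient using the primal updates \eqref{eqxi-update} and \eqref{eqx0-update-1}: for a regular worker,
\begin{equation*}
F'(x_i^l,\xi_i^l)=\tfrac{1}{\alpha_i^l}(x_i^l-x_i^{l+1})-(2\eta_i^l-\eta_i^{l-1}),
\end{equation*}
with an analogous identity at the master that additionally carries the Byzantine piece $\sum_{j\in\mathcal{B}}(2\eta_j^l-\eta_j^{l-1})$. Taking inner products with $x_i^l-\tilde{x}^*$ and $x_0^l-\tilde{x}^*$ and applying $2\langle a-b,a-c\rangle=\|a-c\|^2-\|b-c\|^2+\|a-b\|^2$ produces three kinds of terms: (i) telescoping primal squared distances, (ii) quadratic residuals $(\alpha^l)^2\|F'(x_i^l,\xi_i^l)+2\eta_i^l-\eta_i^{l-1}\|^2$, and (iii) a primal--dual coupling $\sum_l\alpha^l\,\mathbb{E}\langle 2\eta_i^l-\eta_i^{l-1}, x_0^l-x_i^l\rangle$ that appears once we combine the worker and master bounds and use $\tilde{x}^*=x_i^*=x_0^*$ from Lemma \ref{Coro:1}. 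The stepsize mismatch $|\alpha_i^l-\alpha_0^l|=O(1/l)$ between the two denominators is summable, so it contributes only a lower-order residual. The coupling term is then exactly what Lemma \ref{lem:eta} is designed for: its projection-based dual inequality telescopes in $\|\eta_i^l-\eta_i^*\|^2$ and leaves only harmless boundary terms.

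\textbf{Byzantine term, noise, and rate.} The Byzantine contamination enters only at the master through $\alpha_0^l\,\mathbb{E}\langle\sum_{j\in\mathcal{B}}(2\eta_j^l-\eta_j^{l-1}),\, x_0^l-\tilde{x}^*\rangle$. Because \eqref{eta-range} forces $\eta_j^l\in[-\lambda,\lambda]^d$, we have $\|\sum_j(2\eta_j^l-\eta_j^{l-1})\|^2\le 9q^2\lambda^2 d$, and Young's inequality with the parameter tuned so that the $\|x_0^l-\tilde{x}^*\|^2$ coefficient is cancelled by the strong-convexity slack $\tfrac{\mu_0}{2}\|x_0^l-\tilde{x}^*\|^2$ yields an additive term of order $\lambda^2q^2 d/\mu_0$ per iteration; after normalization by $\sum_l\alpha^l$ this survives as the announced constant $m\bar{c}_1$. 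The stochastic-gradient variance is handled directly by Assumption \ref{ass:var}, while the bound $\|2\eta_i^l-\eta_i^{l-1}\|^2\le 9\lambda^2 d$ together with $\|\nabla F_i(x_i^l)\|^2\le 2L_i^2\|x_i^l-\tilde{x}^*\|^2+2\|\nabla F_i(\tilde{x}^*)\|^2$ controls the quadratic residuals; the stepsize ceiling $\bar{A}\le\mu_i/(2L_i^2+(m-1)\beta c)$ is exactly what is needed so that the $L_i^2\|x_i^l-\tilde{x}^*\|^2$ piece is absorbed by the strong-convexity slack $\mu_i/2$. Finally, $\sum_{l=1}^k(\alpha^l)^2=O(\log k)$ and $\sum_{l=1}^k\alpha^l=\Theta(\sqrt{k})$ combine to give the $O(\log k/\sqrt{k})$ rate after dividing by $\sum_l\alpha^l$.

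\textbf{Main obstacles.} The bookkeeping with two distinct stepsize schedules $\alpha_i^l$ and $\alpha_0^l$ is the most delicate point: one must show that the primal--dual cross term collapses cleanly into Lemma \ref{lem:eta}, mirroring the manipulations in \eqref{eq:step3-2}--\eqref{eq:step3-3}, but now extracting a summable error from the $O(1/l)$ stepsize gap rather than the $O(1/k)$ gap available in Theorem \ref{thm:conv}. The second nontrivial point is tuning Young's constants simultaneously at the master (where $\mu_0/2$ must absorb the Byzantine bilinear) and at each worker (where $\mu_i/2$ must absorb both the Lipschitz-generated residual and the leftover coupling), which is precisely why the stepsize ceiling $\bar{A}\le\min\{\mu_0/(4L_0^2),\,\mu_i/(2L_i^2+(m-1)\beta c): i\in\mathcal{R}\}$ appears in the hypothesis and will have to be derived, rather than assumed, in the course of the argument.
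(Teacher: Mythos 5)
Your proposal is correct and follows essentially the same route as the paper's proof: strong convexity to convert inner products into function-value gaps plus a $\mu/2$ slack, Lipschitz bounds and \eqref{eq:ass-var} for the quadratic residuals, Young's inequality with parameter $\mu_0/2$ to absorb the Byzantine bilinear term into the constant $\bar c_1=\frac{18d}{\mu_0}\lambda^2q^2$, Lemma \ref{lem:eta} to telescope the primal--dual coupling, and the $\sum_l(\alpha^l)^2=O(\log k)$ versus $\sum_l\alpha^l=\Theta(\sqrt{k})$ arithmetic followed by Jensen. The only minor deviations are bookkeeping: the paper folds the stepsize mismatch into the Lyapunov weights $\alpha_0^{k-1}/\alpha_i^{k-1}$ rather than treating it as a summable perturbation, and it uses the unshifted dual inequality \eqref{eq:lem:eta2} (telescoping $\|\eta_i^l\|^2$ rather than $\|\eta_i^l-\eta_i^*\|^2$), which is the version that matches the coupling term actually produced here.
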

\begin{proof}
{\bf Step 1.} At the master side, we still have \eqref{eq:expand0}, in the form of
\begin{align}
\mathbb{E}  \|x_0^{k+1}-x_0^*\|^2=&~ \mathbb{E} \|x_0^k-x_0^*\|^2
 + (\alpha_0^k)^2 \mathbb{E}\bigg\| f'_0(x_0^k) - \sum_{i\in\mathcal{R}} \big(2\eta^k_i-\eta^{k-1}_i\big) - \sum_{j\in\mathcal{B}} \big(2\eta^k_j-\eta^{k-1}_j \big) \bigg\|^2 \notag\\
&~-2 \alpha_0^k \mathbb{E}\bigg\langle f'_0(x_0^k) - \sum_{i\in\mathcal{R}} \big(2\eta^k_i-\eta^{k-1}_i\big) - \sum_{j\in\mathcal{B}} \big(2\eta^k_j-\eta^{k-1}_j\big), x_0^k-x_0^* \bigg\rangle. \label{sqk-step1-0}
\end{align}
For the second term in \eqref{sqk-step1-0}, we also have \eqref{eq:x0-2}, which can be further bounded by
\begin{align}
&~\mathbb{E}\bigg\| f'_0(x_0^k) - \sum_{i\in\mathcal{R}} \big(2\eta^k_i-\eta^{k-1}_i\big) - \sum_{j\in\mathcal{B}} \big(2\eta^k_j-\eta^{k-1}_j \big) \bigg\|^2\le2L_0^2\mathbb{E}\| x_0^k  - x_0^*\|^2 + 2(4r+3q)^2\lambda^2d.\label{sqk-step1-1}
\end{align}
Here we use the fact that $f_0$ has Lipschitz continuous gradients in Assumption \ref{ass2}. In addition, using the fact that $f_0$ is strongly convex in Assumption \ref{ass1} leads to
\begin{equation}
\langle f_0'(x_0^k), x_0^k-x_0^*\rangle \ge f_0(x_0^k)-f_0(x_0^*) + \frac{\mu_0}2\|x_0^k-x_0^*\|^2.\label{sc}
\end{equation}
Applying the inequality $2\langle a,b \rangle\le \epsilon \|a\|^2 + \frac1\epsilon \|b\|^2$ to the third term in \eqref{sqk-step1-0} with $\epsilon=\frac{\mu_0}2$ yields
\begin{align}
&-2\mathbb{E}\bigg\langle f'_0(x_0^k) - \sum_{i\in\mathcal{R}} \big(2\eta^k_i-\eta^{k-1}_i\big) - \sum_{j\in\mathcal{B}} \big(2\eta^k_j-\eta^{k-1}_j\big), x_0^k-x_0^* \bigg\rangle\notag\\
=&-2\mathbb{E}\big\langle f'_0(x_0^k) , x_0^k-x_0^* \big\rangle + 2\mathbb{E}\bigg\langle \sum_{i\in\mathcal{R}} \big(2\eta^k_i-\eta^{k-1}_i\big) + \sum_{j\in\mathcal{B}} \big(2\eta^k_j-\eta^{k-1}_j\big), x_0^k-x_0^* \bigg\rangle\notag\\
\overset{\eqref{sc}}\le&-2\mathbb{E}\big(f_0(x_0^k)-f_0(x_0^*)\big) - \mu_0\mathbb{E}\|x_0^k-x_0^*\|^2 + \frac{\mu_0}2 \mathbb{E}\|x_0^k-x_0^*\|^2 + \frac2{\mu_0} \mathbb{E}\bigg\|\sum_{j\in\mathcal{B}} \big(2\eta^k_j-\eta^{k-1}_j\big)\bigg\|^2 + 2\mathbb{E}\langle \sum_{i\in\mathcal{R}} \big(2\eta^k_i-\eta^{k-1}_i\big) , x_0^k-x_0^*\rangle\notag\\
\le&-2\mathbb{E}\big(f_0(x_0^k)-f_0(x_0^*)\big) - \frac{\mu_0}2\mathbb{E}\|x_0^k-x_0^*\|^2 + \frac2{\mu_0}9q^2\lambda^2d + 2\mathbb{E}\langle \sum_{i\in\mathcal{R}} \big(2\eta^k_i-\eta^{k-1}_i\big) , x_0^k-x_0^*\rangle.\label{sqk-step1-2}%
\end{align}
Substituting \eqref{sqk-step1-1} and \eqref{sqk-step1-2} into \eqref{sqk-step1-0} gives
\begin{align}
\mathbb{E} \|x_0^{k+1}-x_0^*\|^2 \le&~ \mathbb{E} \|x_0^k-x_0^*\|^2\left( 1- \alpha_0^k\frac{\mu_0}2 + (\alpha_0^k)^2 2L_0^2 \right) - \alpha_0^k  2\mathbb{E}\big(f_0(x_0^k)-f_0(x_0^*)\big)+ \alpha_0^k\frac{18d}{\mu_0}\lambda^2q^2 + (\alpha_0^k)^2 2(4r+3q)^2d\lambda^2\notag\\
&~+ \alpha_0^k2\mathbb{E}\langle \sum_{i\in\mathcal{R}} \big(2\eta^k_i-\eta^{k-1}_i\big) , x_0^k-x_0^*\rangle\notag\\
\le&~ \mathbb{E} \|x_0^k-x_0^*\|^2 - 2\alpha_0^k \mathbb{E}\big(f_0(x_0^k)-f_0(x_0^*)\big)+ \alpha_0^k\frac{18d}{\mu_0}\lambda^2q^2 + (\alpha_0^k)^2 2(4r+3q)^2d\lambda^2 \label{sqk-step1}\\
&~+ \alpha_0^k2\mathbb{E}\langle \sum_{i\in\mathcal{R}} \big(2\eta^k_i-\eta^{k-1}_i\big) , x_0^k-x_0^*\rangle\notag,
\end{align}
where the last inequality comes from that $\alpha_0^k\le \bar{A}\le\frac{\mu_0}{4L_0^2}$.

{\bf Step 2.} Accordingly, at the worker side, we have for any $i \in \mathcal{R}$ that \eqref{eq:expandi} holds, as
\begin{align}\label{sqk-step2-0}
\mathbb{E} \|x_i^{k+1}-x_i^*\|^2 = &~\mathbb{E} \|x_i^k-x_i^*\|^2 + (\alpha_i^k)^2\mathbb{E}\big\| F'(x_i^k,\xi_i^k) + \big(2\eta^k_i-\eta^{k-1}_i\big) \big\|^2 - 2\alpha_i^k \mathbb{E}\big\langle F'(x_i^k,\xi_i^k) + \big(2\eta^k_i-\eta^{k-1}_i\big), x_i^k-x_i^* \big\rangle. 
\end{align}
For the second term, we still have \eqref{eq:xi-2}, which can be further bounded by
\begin{align}
&~\mathbb{E}\big\| F'(x_i^k,\xi_i^k) + \big(2\eta^k_i-\eta^{k-1}_i\big) \big\|^2 \le 2 L_i^2\mathbb{E}\big\| x_i^k -x_i^*\big\|^2 + 4\delta_i^2 + 64\lambda^2d.\label{sqk-step2-1}
\end{align}
Here we use the fact that $f_0$ has Lipschitz continuous gradients in Assumption \ref{ass2}.
Then, using $\langle F_i'(x_i^k), x_i^k-x_i^*\rangle \ge F_i(x_i^k)-F_i(x_i^*) + \frac{\mu_i}2\|x_i^k-x_i^*\|^2$ as $f_0$ is strongly convex in Assumption \ref{ass1}, we bound the third term in \eqref{sqk-step2-0} as
\begin{align}
-2\mathbb{E}\big\langle F'(x_i^k,\xi_i^k) + \big(2\eta^k_i-\eta^{k-1}_i\big), x_i^k-x_i^* \big\rangle
=&~-2\mathbb{E}\big\langle F'_i(x_i^k), x_i^k-x_i^* \big\rangle - 2\mathbb{E}\langle 2\eta^k_i-\eta^{k-1}_i, x_i^k-x_i^* \rangle\notag \\
\le&~-2\mathbb{E}\big(F_i(x_i^k) - F_i(x_i^*)\big) - \mu_i \mathbb{E}\|x_i^k-x_i^*\|^2 -2 \mathbb{E}\langle 2\eta^k_i-\eta^{k-1}_i, x_i^k-x_i^* \rangle.\label{sqk-step2-2}
\end{align}

Substituting \eqref{sqk-step2-1} and \eqref{sqk-step2-2} into \eqref{sqk-step2-0} gives
\begin{align}\label{sqk-step2}
\mathbb{E} \|x_i^{k+1}-x_i^*\|^2
\le&~\mathbb{E} \|x_i^k-x_i^*\|^2\left( 1 - \alpha_i^k\mu_i + (\alpha_i^k)^2 2L_i^2 \right) -\alpha_i^k 2\mathbb{E}\big(F_i(x_i^k) - F_i(x_i^*)\big) +(\alpha_i^k)^2\big(64\lambda^2d+ 4\delta_i^2\big) \notag\\
&~-\alpha_i^k 2 \mathbb{E}\langle 2\eta^k_i-\eta^{k-1}_i, x_i^k-x_i^* \rangle\notag\\
\le&~\mathbb{E} \|x_i^k-x_i^*\|^2 \frac{\alpha_0^{k-1}/\alpha_i^{k-1}}{\alpha_0^k/\alpha_i^k} - 2\alpha_i^k\mathbb{E}\big(F_i(x_i^k) - F_i(x_i^*)\big) +(\alpha_i^k)^2\big(64d\lambda^2+ 4\delta_i^2\big) \\
&~-\alpha_i^k 2 \mathbb{E}\langle 2\eta^k_i-\eta^{k-1}_i, x_i^k-x_i^* \rangle\notag,
\end{align}
where the last inequality holds from the bound of $A$, such that
$$\frac{\alpha_0^{k-1}/\alpha_i^{k-1}}{\alpha_0^k/\alpha_i^k} - \left( 1 - \alpha_i^k\mu_i + (\alpha_i^k)^2 2L_i^2 \right) = \alpha_i^k\big(\mu_i - \alpha_i^k 2L_i^2 - \alpha_0^{k-1} \frac{(m-1)\beta c}{\sqrt{k}+\sqrt{k-1}}\big)\ge\alpha_i^k\big(\mu_i - (2L_i^2+ (m-1)\beta c)A\big)\ge0.$$

{\bf Step 3.}
Denote $$F^k=\sum_{i\in\mathcal{R}} \mathbb{E}[F(x_i^k,\xi_i^k)]+f_0(x_0^k)=\sum_{i\in\mathcal{R}} F_i(x_i^k)+f_0(x_0^k),\quad F^* = \sum_{i\in\mathcal{R}} F_i(x_i^*)+f_0(x_0^*)=\min_{\tilde{x}} \sum_{i\in\mathcal{R}} \mathbb{E}[F({\tilde{x}},\xi_i)]+f_0({\tilde{x}}),$$ and define a Lyapunov function $\bar{V}^k=E \|x_0^k-x_0^*\|^2+\sum_{i \in \mathcal{R}} \big(\frac{\alpha_0^{k-1}}{\alpha_i^{k-1}}\mathbb{E} \|x_i^k-x_i^*\|^2+\frac{2\alpha_0^{k-1}}\beta\|\eta_i^{k-1}\|^2\big)$. From \eqref{eq:lem:eta2}, we have
\begin{align}
-2\alpha_0^k\mathbb{E}\big\langle 2\eta^k_i-\eta^{k-1}_i, (x_i^k-x_i^*)-(x_0^k-x_0^*) \big\rangle \le \frac{2\alpha_0^k}\beta \left( \|\eta_i^{k-1}\|^2 - \|\eta_i^{k}\|^2\right) \le \frac{2\alpha_0^{k-1}}\beta\|\eta_i^{k-1}\|^2 - \frac{2\alpha_0^k}\beta\|\eta_i^{k}\|^2 \label{sqk-step3-3}.
\end{align}
Consequently, combining \eqref{sqk-step1}, \eqref{sqk-step2}, and \eqref{sqk-step3-3} together gives
\begin{align}
2\alpha_0^k \big( F^k - F^*\big)
\le&~ \bar{V}^k-\bar{V}^{k+1} + \alpha_0^k\frac{18d}{\mu_0}\lambda^2q^2 + \alpha_0^k\bigg(\alpha_0^k 2(4r+3q)^2d\lambda^2 + \alpha_i^k \big(64rd\lambda^2 + 4\sum_{i\in\mathcal{R}}\delta_i^2\big) \bigg)\notag\\
\le &~\bar{V}^k-\bar{V}^{k+1} + \bar{c}_1\alpha_i^k + \bar{c}_2(\alpha_i^k)^2,
\label{sqk-step3}
\end{align}
where $\bar{c}_1=\frac{18d}{\mu_0}\lambda^2q^2$ and $\bar{c}_2=\big(2(4r+3q)^2 + 64r\big)d\lambda^2 + 4\sum_{i\in\mathcal{R}}\delta_i^2.$
Summing up \eqref{sqk-step3} from $0$ to $k$, we obtain
\begin{align}
2\sum_{l=1}^k\alpha_0^l(F^l-F^*)\le& \bar{V}^1 - \bar{V}^{k+1} + \bar{c}_1\sum_{l=1}^k\alpha_i^l + \bar{c}_2\sum_{l=1}^k(\alpha_i^l)^2\le \bar{V}^1 + \bar{c}_1\sum_{l=1}^k\alpha_i^l + \bar{c}_2\sum_{l=1}^k(\alpha_i^l)^2.
\end{align}
Dividing both sides by $2\sum_{l=1}^k\alpha_0^l$ gives
\begin{align}
\sum_{l=1}^k\frac{\alpha_0^l}{\sum_{l'=1}^k\alpha_0^{l'}}F^l-F^*\le m\bar{c}_1 + \frac{\bar{V}^1+\bar{c}_2\sum_{l=1}^k(\alpha_i^l)^2}{2\sum_{l=1}^k\alpha_0^l}\le\frac{\bar{V}^1+\frac{\bar{c}_2}{c^2}\log(k+1)}{4(c+m\beta)\sqrt{k}}.\label{sqk-step4}
\end{align}
The convexity of $F$ and $f_0$ leads to $\sum_{i\in\mathcal{R}} \mathbb{E}[F(\bar{x}_i^k,\xi_i)]+f_0(\bar{x}_0^k) - F^* \le \sum_{l=1}^k\frac{\alpha_0^l}{\sum_{l'=1}^k\alpha_0^{l'}}F^l-F^*$. Combining this inequality and \eqref{sqk-step4}, we complete the proof.
\end{proof}

\end{document}